\documentclass[10pt,a4paper,reqno]{amsart}
\usepackage{amsfonts,amsthm,latexsym,amsmath,amssymb,amscd,amsmath, epsf}
\usepackage{url,graphics} 
\usepackage[dvips]{graphicx,epsfig} 
\usepackage{caption}

\input cyracc.def
\newfam\cyrfam
\font\tencyr=wncyr10
\font\sevencyr=wncyr7

\textfont\cyrfam=\tencyr \scriptfont\cyrfam=\sevencyr
  \scriptscriptfont\cyrfam=\sevencyr

\font\tencyr=wncyr10

\font\tencyri=wncyi10

\newtheorem{definition}{Definition}
\newtheorem{theorem}{Theorem}
\newtheorem{lemma}{Lemma}

\newtheorem{proposition}{Proposition}
\newtheorem{remark} {Remark}
\newtheorem{example} {Example}
\newtheorem{problem} {Problem}

\newtheorem{conjecture}[problem]{Conjecture}

\newcommand {\bC} {\mathbb {C}}

\newcommand {\bZ} {\mathbb Z}

\newcommand {\CC} {\mathcal C} 
 
\newcommand {\cD} {\mathcal {D}}
\newcommand {\J} {\mathcal {J}}  
\newcommand {\I} {\mathcal {I}}  
\newcommand {\B} {\mathcal {B}}  
\newcommand {\K} {\mathbb {K}}  

\newcommand {\KK} {\mathcal {K}}
\newcommand {\CA} {\mathcal A}
\newcommand{\CU}{\mathcal U}
\newcommand{\cF}{\mathcal F}

\newcommand{\tY}{\widetilde{Y}}
\newcommand{\tZ}{\widetilde{Z}}

\begin{document}

             \title[``K-theoretic" analog distinguishes graphs]
             {``K-theoretic" analog of Postnikov-Shapiro algebra distinguishes graphs}

\author [G.~Nenashev]{Gleb Nenashev}
\address{Department of Mathematics, Stockholm University, SE-106 91, Stockholm,
            Sweden}
\email{nenashev@math.su.se}

\author[B.~Shapiro]{Boris Shapiro}
\address{Department of Mathematics, Stockholm University, SE-106 91, Stockholm,
            Sweden}
\email{shapiro@math.su.se}

\date{\today}
\keywords{spanning forests and trees, commutative algebras, filtered algebras}
\subjclass[2000]{Primary 05E40, Secondary 05C31}

\dedicatory{\tencyri Izvestnomu yaponskomu matematiku Tole Kirillovu posvyashchaet{}sya}

\begin{abstract}  In this paper we study a filtered ``$K$-theoretical"  analog of a  graded algebra  associated to any  loopless  graph $G$  which was introduced  in \cite{PS}. We show that two such filtered algebras are isomorphic if and only if their graphs    are isomorphic. We also study a large family of filtered generalizations of the latter graded algebra which includes the above  ``$K$-theoretical"  analog. 


\end{abstract}

\maketitle

\section {Introduction}

The following  square-free algebra $\CC_G$    associated to an arbitrary vertex labeled graph $G$ was defined in  \cite {PS}, see also \cite{AP}.  
 Let $G$ be a graph without loops on the vertex set $\{0, . . . , n\}.$ (Below we always  assume that all graphs might have multiple edges, but no loops). 
 Throughout the whole paper, we fix  a field $\K$ of zero characteristic.    Let  $\Phi_G$ be the graded commutative algebra over $\K$ 
generated by the variables $\phi_e, e \in G$, with the defining relations:
$$(\phi_e)^2 = 0, \quad \text {for any edge}\; e\in G.$$
Let $\CC_G$ be the subalgebra of  $\Phi_G$ generated by the elements
$$X_i =\sum_{e\in G} c_{i,e} \phi_e,$$
for  $i = 1, . . . , n, $ where 
\begin{equation}\label{eq:def}
c_{i,e}=\begin{cases} \;\;\;1\quad \text{if}\; e=(i,j), i<j;\\
                                            -1\quad\text{if}\; e=(i,j), i>j;\\
                                             \;\;\;0\quad \text{otherwise}.
\end{cases}
\end{equation}

For the reasons which will be clear soon, we call $\CC_G$ the {\it spanning forests counting algebra}  of $G$. Its Hilbert series and the set of defining relations were calculated in  \cite {PSS} following the initial paper \cite{SS}. Namely, let $\J_G$ be the ideal in $\K[x_1,\dots,x_n]$ generated by the polynomials
\begin{equation}\label{eq:relF}
p_I=\left( \sum_{i\in I}x_i\right)^{D_I+1},
\end{equation} 
where $I$ ranges over all nonempty subsets in $\{1,\dots,n\}$ and $D_I=\sum_{i\in I}d_I(i),$ where $d_I(i)$ is  the total number of edges connecting a given vertex  $i\in I$ with all vertices outside $I$. Thus, $D_I$ is the total number of edges between $I$ and the  complementary set of vertices $\bar I$. Set $\B_G:=\K[x_1,\dots,x_n]/\J_G.$

\begin{remark}{\rm 
Observe that since $\sum_{i=0}^nX_i=0$, we can  define $\CC_G$ as  the subalgebra of  $\Phi_G$ generated by $X_0,X_1,\dots,X_n$. 

We  can also define $B_G$ as the quotient algebra of $\K[x_0,\dots,x_n]$ by the ideal generated by $p_I$, where $I$ runs over all subsets of $\{x_0,x_1,\cdots,x_n\}$. This follows from the relation 
$$p_I=\left( \sum_{i\in I}x_i\right)^{D_I+1}=\left( p_{\{0,1,\dots,n\}}-\sum_{i\in \overline{I}}x_i\right)^{D_{\overline{I}}+1}.$$
} 
\end{remark}

To describe the Hilbert polynomial of $\CC_G,$  we need the following classical notion going back to W.~T.~Tutte. Given a simple graph $G,$ fix an arbitrary linear order of its edges. Now, given a spanning forest $F$ in $G$ (i.e., a subgraph without cycles which includes all vertices of $G$) and  an edge $e\in G\setminus F$ in its complement, we say that $e$ is {\it externally active} for $F,$ if there exists a cycle $C$ in $G$ such that all edges in $C\setminus \{e\}$ belong to $F$ and $e$ is minimal in $C$ with respect to the chosen linear order. The total number of external edges is called the {\it external activity} of $F$.  Although the external activity of a given forest/tree in $G$ depends on the choice of a linear ordering of edges, the total number of forests/trees with a given external activity is independent of this ordering. Now we are ready to formulate the main result of~\cite{PSS}. 

\begin{theorem}\label{th:forests}{\rm [Theorems~3 and 4 of \cite{PSS}]} For any simple graph $G,$  algebras  $\B_G$ and $\CC_G$ are isomorphic. 
The total dimension of these algebras (as vector spaces over $\K$) is equal to the number of spanning subforests in~$G$.
The dimension  of the $k$-th graded component of these algebras equals
the number of subforests $F$ in $G$ with external activity $|G|-|F|-k$. Here  $|G|$ (resp. $|F|$) stands for  the number of edges in $G$ (resp. $F$). \end{theorem}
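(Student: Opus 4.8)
The plan is to exhibit a surjection $\B_G\twoheadrightarrow\CC_G$, identify the target Hilbert series with the external--activity generating function, and then match the two sides by a deletion--contraction induction. Write $a(F)$ for the external activity of a spanning forest $F$, so that the statement to prove is the graded identity $\mathrm{Hilb}(\CC_G,t)=\sum_{F}t^{\,|G|-|F|-a(F)}=:W_G(t)$, the sum over all acyclic edge subsets $F$; specializing $t=1$ recovers the total--dimension claim, and the graded claim is read off coefficientwise. First I would construct the surjection. The assignment $x_i\mapsto X_i$ defines a graded algebra map $\psi\colon \K[x_1,\dots,x_n]\to\CC_G$, surjective by definition of $\CC_G$. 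To check $\psi(p_I)=0$, compute $\sum_{i\in I}X_i=\sum_e\bigl(\sum_{i\in I}c_{i,e}\bigr)\phi_e$: an edge with both or neither endpoint in $I$ contributes $0$, while an edge with exactly one endpoint in $I$ contributes $\pm1$. Thus $\sum_{i\in I}X_i$ is a linear combination of the $D_I$ generators $\phi_e$ indexed by the edges joining $I$ to its complement, and since each such $\phi_e$ squares to zero, its $(D_I+1)$--st power expands into terms each repeating some $\phi_e$ and hence vanishes. Therefore $\J_G\subseteq\ker\psi$ and $\psi$ descends to a graded surjection $\bar\psi\colon\B_G\twoheadrightarrow\CC_G$; in particular $\mathrm{Hilb}(\B_G,t)\ge\mathrm{Hilb}(\CC_G,t)$ coefficientwise.

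Next I would fix a linear order on the edges and let $e$ be the maximal one. On the combinatorial side $W_G$ satisfies $W_G=t\,W_{G-e}+W_{G/e}$, with $W=1$ for the edgeless graph. Indeed, forests avoiding $e$ are exactly the forests of $G-e$; as $e$ is maximal it is never externally active and no cycle witnessing the activity of another edge can use it, so $a_G(F)=a_{G-e}(F)$ and the exponent gains one unit from $|G|=|G-e|+1$, producing $t\,W_{G-e}$. Forests containing $e$ biject with forests of $G/e$ with both $|G|-|F|$ and the external activity preserved, producing $W_{G/e}$. (Should contraction create a loop, note a loop is always externally active and raises $|G|$ by one, leaving every exponent unchanged, and is invisible to $\CC_G$; so the induction runs harmlessly inside loopless multigraphs.)

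The crux is to realize the same recursion on the algebra, i.e.\ to prove $\mathrm{Hilb}(\B_G,t)\le W_G(t)$ by induction. The natural vehicle is a deletion--contraction short exact sequence $0\to\B_{G-e}[-1]\xrightarrow{\ \mu\ }\B_G\xrightarrow{\ \pi\ }\B_{G/e}\to0$, with $\pi$ a surjection onto the contraction and $\mu$ multiplication by a degree--one class (morally $x_u-x_v$, where $e=\{u,v\}$); this yields $\dim_\K\B_G\le W_G$. I would combine it with the reverse estimate $\dim_\K\CC_G\ge W_G$, obtained by exhibiting for each forest $F$ an explicit product of the $X_i$ of degree $|G|-|F|-a(F)$ whose expansion in the square--free monomial basis $\{\phi_S\}$ has a distinguished leading term, arranged so that distinct forests yield distinct leading monomials; linear independence then gives the bound. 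Since $\bar\psi$ is surjective, $W_G\le\dim_\K\CC_G\le\dim_\K\B_G\le W_G$, forcing equalities throughout: $\bar\psi$ is an isomorphism and the common Hilbert series is $W_G$, which is exactly the asserted external--activity count.

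I expect the deletion--contraction sequence to be the main obstacle. The naive contraction map ``merge $x_u$ and $x_v$'' is already \emph{not} well defined, since a relation $p_I=(\sum_{i\in I}x_i)^{D_I+1}$ with $u\in I\not\ni v$ need not map into $\J_{G/e}$ when $v$ has several neighbours outside $I$; so both $\pi$ and the identification $\operatorname{im}\mu=\ker\pi$ require the genuine power--ideal compatibility, comparing the generators attached to $I$ and to $I'=(I\setminus\{u\})\cup\{v\}$ whose exponents $D_I,D_{I'}$ differ exactly along the edges separated by $e$. An equivalent, perhaps more transparent route is to compute a Gröbner basis of $\J_G$ and biject its standard monomials with forests, the degree matching $|G|-|F|-a(F)$; here the appearance of external activity (a broken--circuit bookkeeping) is the delicate point. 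Everything else --- the surjection, the combinatorial recurrence, and the final sandwich --- is routine once these inputs are in place.
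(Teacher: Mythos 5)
A remark on the comparison itself: the paper never proves this statement---it is imported verbatim from \cite{PSS} (Theorems~3 and~4)---so your argument can only be judged on its own merits. The parts you actually carry out are correct: $x_i\mapsto X_i$ kills every $p_I$ (since $\sum_{i\in I}X_i$ is a signed sum of the $D_I$ cut-edge generators $\phi_e$, so its $(D_I+1)$-st power vanishes by squarefreeness), giving the graded surjection $\B_G\twoheadrightarrow\CC_G$; the recursion $W_G=t\,W_{G-e}+W_{G/e}$ for $e$ maximal, including your handling of loops created by contraction, is a correct Tutte-type identity; and the final sandwich logic is valid. This is indeed the general strategy of \cite{PS}, \cite{PSS}.

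However, the two inputs that would close the sandwich are both missing, and they are the actual content of the theorem. (i) The upper bound $\mathrm{Hilb}(\B_G,t)\le W_G(t)$ rests on the exact sequence $0\to\B_{G-e}[-1]\to\B_G\to\B_{G/e}\to 0$, which you never construct; as you yourself observe, the contraction map is not even well defined, and this is not a technicality that ``power-ideal compatibility'' repairs automatically. Concretely, let $G$ be the star with center $v$ and leaves $u,z_1,z_2$, and contract the pendant edge $e=uv$ to a vertex $w$. Then $p_{\{u\}}=x_u^2\in\J_G$ maps to $x_w^2$, while in $\B_{G/e}$ (in the all-vertex presentation) the relation $x_w+x_{z_1}+x_{z_2}\equiv 0$ gives $x_w^2\equiv 2x_{z_1}x_{z_2}\neq 0$, because $\B_{G/e}\cong\K[x_{z_1},x_{z_2}]/(x_{z_1}^2,x_{z_2}^2)$; hence $x_w^2\notin\J_{G/e}$ and no homomorphism $\pi$ exists on these generators. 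A correct argument must either find the right algebraic carrier of this recursion or replace it entirely (for instance by the monomial-deformation/Gr\"obner route you allude to, which you also do not carry out). (ii) The lower bound $\mathrm{Hilb}(\CC_G,t)\ge W_G(t)$ is asserted by postulating, for each forest $F$, a product of the $X_i$ of degree $|G|-|F|-a(F)$ with a leading monomial distinct from those of all other forests; no such family is constructed, and this triangularity construction is exactly where external activity enters the algebra---it is the heart of the theorem, not a routine verification. So what you have is a sound roadmap with both load-bearing steps flagged but not supplied; as a proof it has genuine gaps.
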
\

In the above notation, our main object will be the filtered subalgebra $\KK_G\subset \Phi_G$    defined by the generators: 
$$Y_i=\exp(X_i)=\prod_{e\in G} (1+c_{i,e}\phi_e),\; i=0,\dots,n.$$
(Notice that we have one more generator here than in the previous case.)

\begin{remark} {\rm Since $Y_i$ is  obtained by exponentiation of $X_i,$ we call $\KK_G$ the ``K-theoretic" analog of $\CC_G$. }
\end{remark}

\medskip
Our first result is as follows.  Define  the ideal  $\I_G$ in $\K[y_0,y_1,\dots,y_n]$ as generated by the polynomials
\begin{equation}\label{eq:relFF}
q_I=\left( \prod_{i\in I}y_i-1\right)^{D_I+1},
\end{equation} 
where $I$ ranges over all nonempty subsets in $\{0,1,\dots,n\}$ and the number $D_I$ is the same as in 
\eqref{eq:relF}. Set $\cD_G:=\K[y_0,\dots,y_n]/\I_G.$ 

\begin{theorem}\label{th:4alg} For any graph $G,$  algebras   $B_G,$  $\CC_G$, $\cD_G$ and $\KK_G$  are  isomorphic as (non-filtered) algebras.
\end{theorem}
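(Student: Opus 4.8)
The plan is to prove Theorem~\ref{th:4alg} by establishing a chain of isomorphisms. By Theorem~\ref{th:forests} we already know $\B_G\cong\CC_G$ (at least for simple graphs; the general case with multiple edges should follow by the same arguments or be reducible to it). Thus it suffices to produce two further isomorphisms: $\CC_G\cong\KK_G$ and $\cD_G\cong\KK_G$ (equivalently $\B_G\cong\cD_G$). The overall strategy is to exploit the fact that $\KK_G$ is generated by the exponentials $Y_i=\exp(X_i)$, and that exponentiation is an invertible change of generators in the nilpotent setting of $\Phi_G$.

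\textbf{Step 1: $\CC_G\cong\KK_G$.} First I would observe that inside $\Phi_G$ every $\phi_e$ is nilpotent (indeed $\phi_e^2=0$), so every element of the augmentation ideal is nilpotent and the formal exponential $\exp(X_i)=\sum_{k\ge 0}X_i^k/k!$ is a genuine finite sum, well-defined since $\K$ has characteristic zero. Consequently $\log(Y_i)=X_i$ is also a finite sum, so $X_i$ lies in the subalgebra generated by the $Y_j$ and vice versa. This shows that $\KK_G$ and $\CC_G$ are \emph{the same subalgebra} of $\Phi_G$ — they have identical images — and therefore are equal, hence isomorphic, as (ungraded) algebras. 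The only subtlety is to confirm that passing between $\{X_i\}$ and $\{Y_i\}$ genuinely stays within the generated subalgebra and does not require the extra generator $X_0$; here I would invoke the Remark noting $\sum X_i=0$, so $X_0=-\sum_{i\ge 1}X_i$ is already available.

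\textbf{Step 2: $\cD_G\cong\KK_G$.} The plan is to build an explicit surjection $\K[y_0,\dots,y_n]\twoheadrightarrow\KK_G$ sending $y_i\mapsto Y_i$ and to identify its kernel with $\I_G$. The map is well-defined because each $Y_i$ is a unit in $\Phi_G$ (its constant term is $1$), matching the fact that the $y_i$ should map to units; one must check the generating relations $q_I$ are killed. For this, note $\prod_{i\in I}Y_i=\exp\!\big(\sum_{i\in I}X_i\big)$, so $\prod_{i\in I}Y_i-1=\exp(S_I)-1$ where $S_I=\sum_{i\in I}X_i$, and this difference lies in the augmentation ideal with the same leading term as $S_I$. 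Since the relation $p_I=S_I^{D_I+1}=0$ holds in $\B_G\cong\CC_G$ and $\exp(S_I)-1$ differs from $S_I$ by a unit times a power series starting at $S_I$, one gets $(\exp(S_I)-1)^{D_I+1}=0$ as well, so $q_I\mapsto 0$. The reverse containment — that $\I_G$ is the \emph{full} kernel, not merely contained in it — would follow by a dimension count: both $\cD_G$ and $\KK_G$ have dimension equal to the number of spanning subforests of $G$ (for $\KK_G$ via Step~1 and Theorem~\ref{th:forests}; for $\cD_G$ by comparing $q_I$ with $p_I$ through the substitution $y_i=\exp(x_i)$, which is a filtered change of coordinates preserving the relevant associated graded ideal).

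\textbf{Main obstacle.} I expect the genuinely delicate point to be Step~2's identification of the kernel, specifically showing $\dim_\K\cD_G$ equals the forest count rather than something larger. The relations $q_I=(\prod y_i-1)^{D_I+1}$ are not homogeneous, so one cannot directly invoke the graded Hilbert-series computation of Theorem~\ref{th:forests}; instead I would pass to the associated graded algebra of $\cD_G$ under the filtration by the augmentation ideal (equivalently, set $y_i=1+x_i$ and take lowest-degree parts), and argue that the leading form of $q_I$ is exactly $p_I=x_I^{D_I+1}$ up to a unit. This reduces the leading-order structure of $\I_G$ to $\J_G$, giving $\dim\operatorname{gr}\cD_G\le\dim\B_G$; combined with the surjection onto $\KK_G$ of the correct dimension, all inequalities become equalities and the isomorphisms follow. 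Handling multiple edges uniformly throughout (since Theorem~\ref{th:forests} is stated for simple graphs) is a secondary bookkeeping concern I would address by noting the relations~\eqref{eq:relF} and~\eqref{eq:relFF} only depend on the multiset edge counts $D_I$.
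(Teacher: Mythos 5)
Your proposal is correct, and while your Step~1 coincides with the paper's own argument (its Lemma~\ref{lem:isCK}: exponentials and logarithms are finite sums in the nilpotent setting, so $\CC_G$ and $\KK_G$ are literally the same subalgebra of $\Phi_G$), your Step~2 handles $\cD_G$ by a genuinely different route. The paper proves $\B_G\cong\cD_G$ directly (Lemma~\ref{lem:isBD}) via an exact change of coordinates: both algebras are re-presented as quotients of formal power series rings (legitimate because the ideals contain $(y_i-1)^{d_i+1}$ and $x_i^{d_i+1}$, respectively), and the substitution $\widetilde{y}_i\mapsto e^{x_i}-1$ is an automorphism of the power series ring carrying each generator $\widetilde{q}_I$ to $p_I$ times an invertible series, hence carrying the one ideal exactly onto the other; the surjection $y_i\mapsto Y_i$ plus dimension count is then reserved for the filtered statement (Theorem~\ref{th:main}). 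You instead run that surjection argument immediately and close the loop with a one-sided initial-form bound: since the lowest-degree form of $q_I$ at $y_i=1+x_i$ is $p_I$, you get $\operatorname{in}(\I_G)\supseteq\J_G$ (only the trivial containment of initial ideals is needed, never the hard direction), hence $\dim\operatorname{gr}\cD_G\le\dim\B_G$, while the surjection forces $\dim\cD_G\ge\dim\KK_G=\dim\CC_G$, and all inequalities collapse. Your route avoids power series rings and, as a bonus, yields Theorem~\ref{th:main} at the same time, since your map $h$ is filtration-preserving; what it loses is the explicit identification of the two ideals, which you recover only a posteriori. One point you leave implicit and should state: your chain uses $\dim_\K\cD_G=\dim_\K\operatorname{gr}\cD_G$, i.e., separatedness of the augmentation-adic filtration on $\cD_G$. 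This is easy but not automatic in general: here the singleton relations $(y_i-1)^{d_i+1}\in\I_G$ make each $y_i-1$ nilpotent in $\cD_G$, so a power of the augmentation ideal lies in $\I_G$ and the filtration terminates; without this remark, passing to the associated graded could a priori lose dimension rather than preserve it.
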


Moreover, the following stronger statement holds.

\begin{theorem}\label{th:main} For any graph $G$,  algebras  $\cD_G$ and $\KK_G$ are  isomorphic  as filtered algebras. 
\end{theorem}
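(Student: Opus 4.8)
The plan is to exhibit a single explicit map and to control it through the associated graded construction. Both algebras carry natural \emph{decreasing} filtrations. On $\KK_G\subseteq\Phi_G$ I use the order filtration $F^k\KK_G=\KK_G\cap\bigoplus_{j\ge k}\Phi_G^{(j)}$, where $\Phi_G^{(j)}$ is the span of the square-free monomials of $\phi$-degree $j$. On $\cD_G=\K[z_0,\dots,z_n]/\I_G$, where I set $z_i:=y_i-1$, I use the $\mathfrak n$-adic filtration $F^k\cD_G=(\mathfrak n^k+\I_G)/\I_G$ with $\mathfrak n=(z_0,\dots,z_n)$. The point of the substitution $z_i=y_i-1$ is that, rewriting $\prod_{i\in I}(1+z_i)-1=\sum_{i\in I}z_i+(\text{higher order})$, the initial (lowest-degree) form of $q_I$ is exactly $p_I$ in the variables $z_i$; together with the Remark this makes $\mathrm{gr}\,\cD_G$ a quotient of $\B_G$.

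First I would take the algebra surjection $\bar\psi:\cD_G\twoheadrightarrow\KK_G$, $y_i\mapsto Y_i$, underlying Theorem~\ref{th:4alg}; its well-definedness rests on the fact that $\prod_{i\in I}Y_i-1$ is a polynomial in the $\phi_e$ supported only on the $D_I$ edges of the cut between $I$ and its complement, so that its $(D_I+1)$-st power vanishes because each $\phi_e$ squares to $0$. Then I would check that $\bar\psi$ is filtered: since $W_i:=Y_i-1=X_i+(\text{higher order})$ lies in $F^1\KK_G$, any degree-$k$ monomial in the $z_i$ maps into $F^k\KK_G$, whence $\bar\psi(F^k\cD_G)\subseteq F^k\KK_G$.

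The crux is to identify the induced map on associated gradeds. Passing to initial forms realizes $\mathrm{gr}\,\KK_G$ as the subalgebra of $\Phi_G$ spanned by the lowest-degree parts of elements of $\KK_G$; since $X_i=\mathrm{in}(W_i)$ lies in $\mathrm{gr}^1\KK_G$, this subalgebra contains $\CC_G$. By Theorem~\ref{th:4alg}, $\dim_\K\mathrm{gr}\,\KK_G=\dim_\K\KK_G=\dim_\K\CC_G$, so in fact $\mathrm{gr}\,\KK_G=\CC_G$. Now $\mathrm{gr}\,\bar\psi$ sends the class of $z_i$ to $X_i$, so its image contains all generators of $\CC_G$ and $\mathrm{gr}\,\bar\psi$ is onto; a second application of Theorem~\ref{th:4alg} gives $\dim_\K\mathrm{gr}\,\cD_G=\dim_\K\cD_G=\dim_\K\CC_G=\dim_\K\mathrm{gr}\,\KK_G$, so $\mathrm{gr}\,\bar\psi$ is an isomorphism. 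Finally, the standard lemma that a filtered morphism inducing an isomorphism on associated graded objects (with bounded, hence separated and exhaustive, filtrations) is itself a filtered isomorphism upgrades $\bar\psi$ to the desired filtered isomorphism $\cD_G\cong\KK_G$.

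I expect the main obstacle to be the clean identification $\mathrm{gr}\,\KK_G=\CC_G$ together with the matching of the two filtrations so that initial forms correspond. One must make sure that the decreasing (order, resp.\ $\mathfrak n$-adic) filtrations---rather than the naive increasing degree filtration, whose associated graded would be governed by the leading forms $\prod_{i\in I}z_i^{D_I+1}$ and would \emph{not} reproduce $\B_G$---are the correct choice, and that the bookkeeping of the extra generator $Y_0$ against the relation $\prod_i Y_i=1$ (equivalently $\sum_i X_i=0$) is consistent in degree $1$.
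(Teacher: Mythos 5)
Your argument is internally coherent, but it proves an isomorphism for the \emph{wrong filtrations}, and so it misses the actual content of the theorem. In this paper $\KK_G$ carries the increasing filtration by generator degree: $F_k\KK_G$ is the span of products of at most $k$ of the generators $Y_i$ (equivalently, of monomials of degree at most $k$ in the $\tY_i$), and $\cD_G$ correspondingly carries the filtration by images of polynomials of degree at most $k$ in the $y_i$. This is what ``the filtered subalgebra $\KK_G\subset\Phi_G$ defined by the generators'' means, and it is made explicit in the proof of Proposition~\ref{pr-hil}, where the $k$-th partial sum of the Hilbert series is identified with $\dim\,\mathrm{span}\{f^{\alpha_0}(X_0)\cdots f^{\alpha_n}(X_n):\ \sum\alpha_i\le k\}$. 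Your decreasing order filtration $F^k\KK_G=\KK_G\cap\bigoplus_{j\ge k}\Phi_G^{(j)}$ cannot be the intended one: as you correctly prove, its associated graded is $\CC_G$, which would force $HS_{\KK_G}=HS_{\CC_G}$; but Example~\ref{examp} exhibits $HS_{\KK_{G_1}}=1+4t+10t^2+14t^3+3t^4$ while $HS_{\CC_{G_1}}=1+3t+6t^2+9t^3+8t^4+4t^5+t^6$. Indeed, the whole point of the paper (Theorem~\ref{th:Isom} versus Theorem~\ref{nenashev}) is that the filtered structure of $\KK_G$ is strictly finer than the graded algebra $\CC_G\cong\B_G$; under your filtrations both associated gradeds collapse to $\CC_G$ and this extra information disappears. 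Your closing remark --- that the increasing degree filtration must be rejected because its associated graded ``would not reproduce $\B_G$'' --- is precisely the misunderstanding: for the intended filtration the associated graded is \emph{not supposed} to reproduce $\B_G$.

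For the correct filtrations the proof is in fact much shorter than yours, and no associated-graded machinery is needed. The map $h:\cD_G\to\KK_G$, $h(y_i)=Y_i$, is well defined (your cut-edge argument for why the relations $q_I$ hold for the $Y_i$ is exactly right), and it satisfies the \emph{strict} equality $h(F_k\cD_G)=F_k\KK_G$ for every $k$, simply because $F_k\KK_G$ is by definition the span of the images under $h$ of the monomials of degree at most $k$; surjectivity plus the equality of total dimensions from Theorem~\ref{th:4alg} makes $h$ bijective, and strictness then makes the inverse filtered as well, so $h$ is a filtered isomorphism. (Strictness is the point where one must be careful, since a filtration-preserving bijection need not have a filtration-preserving inverse; your gr-argument is one legitimate way to obtain strictness, but you applied it to filtrations under which your key identification $\mathrm{gr}\,\KK_G=\CC_G$ holds --- an identification that is \emph{false} for the paper's filtration, as the Hilbert series above show.) In short: you have correctly proven a true but different statement, namely that $\cD_G$ with the $\mathfrak{n}$-adic filtration is isomorphic to $\KK_G$ with the order filtration, and that statement is not Theorem~\ref{th:main}.
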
 


Recall that in a recent paper \cite{Ne}  the first author has shown that $\CC_G$ contains all information about the matroid of $G$ and only it. Namely, 

\begin{theorem}\label{nenashev}{\rm [Theorem~5 of \cite{Ne}]} Given two graphs $G_1$ and $G_2,$ algebras $\CC_{G_1}$ and $\CC_{G_2}$ are isomorphic  if and only if the matroids of $G_1$ and $G_2$ coincide. (The  latter isomorphism can be thought of either as graded or as non-graded, the statement holds in both cases.) 
\end{theorem}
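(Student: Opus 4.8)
The plan is to pass from the subalgebra $\CC_G\subset\Phi_G$ to its presentation $\B_G=\K[x_1,\dots,x_n]/\J_G$ afforded by Theorem~\ref{th:forests}, and then to argue entirely on the power ideal $\J_G$, generated by the forms $\ell_I^{D_I+1}$ with $\ell_I=\sum_{i\in I}x_i$. I would split the equivalence into its two implications and treat the ``easy'' construction of an isomorphism separately from the ``hard'' reconstruction of the matroid.

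For the implication that equal matroids force isomorphic algebras, the first step is to note that the degree-one component $V:=\CC_G^{(1)}$ is spanned by the $X_i$, whose only linear relations are $\sum_{i\in C}X_i=0$ over the connected components $C$; hence $\dim V=\mathrm{rk}\,M(G)$ and $V$ by itself carries no finer graph data. The decisive observation is that, although the individual cut numbers $D_I$ are \emph{not} matroid invariants (two $2$-isomorphic graphs already have different $D_I$, as one sees already for a path versus a star), the whole ideal $\J_G$ is a matroid invariant up to a linear change of coordinates in $V$: it is a zonotopal-type power ideal attached to the vector configuration representing $M(G)$, with $D_I+1$ the nullity-type exponent dictated by the matroid. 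I would therefore show that any isomorphism of the matroids of $G_1$ and $G_2$ lifts to an element of $GL(V)$ carrying $\J_{G_1}$ onto $\J_{G_2}$, whence $\B_{G_1}\cong\B_{G_2}$; Theorem~\ref{th:forests} (external activities being a Tutte evaluation, hence a matroid invariant) guarantees at the outset that the Hilbert series already agree, which serves as a consistency check.

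For the converse I would reconstruct $M(G)$ from the abstract graded algebra $\CC_G$. The space $V=\CC_G^{(1)}$ is intrinsic, and so is the nilpotency function $f(\ell)=\min\{d:\ell^{\,d}=0\ \text{in}\ \CC_G\}$ on $V$; on the distinguished forms one has $f(\ell_I)=D_I+1$, so $f$ packages all cut numbers together with their mutual geometry inside $V$. The plan is to extract from the stratification of $V$ by the values of $f$ (an arrangement of subspaces with multiplicities) first the rank and the number of edges of $M(G)$, and then, by restricting $f$ to the subspaces cut out by the vanishing of subfamilies of forms, the full rank function of the matroid. An isomorphism $\CC_{G_1}\cong\CC_{G_2}$ induces a linear isomorphism $V_1\to V_2$ intertwining $f_{G_1}$ and $f_{G_2}$, and once $f$ is shown to determine $M(G)$ this yields $M(G_1)=M(G_2)$.

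The main obstacle is precisely this converse, and within it the non-canonicity of the generators $X_i$: an algebra isomorphism need not send the $X_i$ to the $X_j$, so I cannot read off the labelled numbers $D_I$ directly and must instead isolate the forms $\ell_I$ — and then the matroid — by $GL(V)$-invariant features of $f$ (for instance as the forms along which $f$ drops maximally on the relevant subspaces). Establishing that these invariant data recover the matroid \emph{exactly}, rather than merely a Tutte-type shadow of it, is the crux. The contrast with the present paper is instructive here, since it is exactly the vertex information washed out in $\CC_G$ (the information distinguishing $2$-isomorphic graphs) that the filtered algebra $\KK_G$ will be shown to retain.
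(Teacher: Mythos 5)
A preliminary remark: the paper does not prove Theorem~\ref{nenashev} at all --- it quotes it as Theorem~5 of \cite{Ne} and uses it only as a contrast to Theorem~\ref{th:Isom} --- so there is no internal proof to compare against. I can only judge your proposal on its own merits and against the argument of \cite{Ne}, whose basic engine (nilpotency degrees of degree-one elements recording edge supports) you have correctly identified.

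Both implications, as you have written them, have genuine gaps. In the forward direction the entire weight rests on the sentence that a matroid isomorphism ``lifts to an element of $GL(V)$ carrying $\J_{G_1}$ onto $\J_{G_2}$,'' and this is asserted rather than proved. It is not a formality: representations of a representable matroid are in general \emph{not} unique up to linear equivalence, so ``the vector configuration representing $M(G)$'' is not a well-defined object, and no zonotopal generality will produce the lift for you. What makes the graphic case work is graphicness itself: by Whitney's $2$-isomorphism theorem (equivalently, unique representability of regular matroids), an isomorphism $M(G_1)\cong M(G_2)$ yields an edge bijection under which the two cut spaces inside $\K^{E}$ coincide up to sign changes of coordinates; since each sign change $\phi_e\mapsto-\phi_e$ is an automorphism of the ambient algebra $\Phi_G$, this identifies $\CC_{G_1}$ with $\CC_{G_2}$ directly, with no need to pass through $\B_G$ at all. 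Some input of this kind must be invoked explicitly; without it the ``easy'' direction is not easy.

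In the converse there are two holes. First, the theorem allows non-graded isomorphisms, and then your opening claim that $V=\CC_G^{(1)}$ is intrinsic fails: a non-graded isomorphism need not respect the grading. The repair is exactly the device this paper introduces for $\KK_G$ in Lemma~\ref{lem-degree}: the nilpotents of $\CC_G$ form the maximal ideal $\mathfrak{m}$, the reducible nilpotents are $\mathfrak{m}^2$, and the degree taken modulo reducible nilpotents is therefore well defined on $\mathfrak{m}/\mathfrak{m}^2\cong V$ and invariant under arbitrary algebra isomorphisms; with this replacement your function $f$ survives the non-graded case. Second, the crux --- that $f$ determines the matroid exactly, not merely a Tutte-type shadow --- is left open by your own admission, and it is most of the work. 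To close it: in characteristic zero, $f(\ell)-1$ equals the size of the support of $\ell$ viewed in the cut space $Z\subset\K^{E}$, so $|E|-f(\ell)+1$ counts the coordinate hyperplanes $H_e=\{z\in Z: z_e=0\}$ containing $\ell$; this counting function on $V$ determines the multiset of hyperplanes $\{H_e\}_{e\in E}$, and the matroid on $E$ with rank function $A\mapsto\mathrm{codim}\bigl(\bigcap_{e\in A}H_e\bigr)$ is precisely $M(G)$ (equivalently: the minimal nonzero supports in $Z$ are the cocircuits, and a matroid is determined by its cocircuits). Until both of these points are written out, what you have is a correct plan --- essentially the plan of \cite{Ne} --- but not yet a proof.
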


On the other hand, filtered algebras $\cD_G$ and $\KK_G$ contain complete information about $G$. 

\medskip
\begin{theorem}\label{th:Isom} Given two graphs $G_1$ and $G_2$ without isolated vertices, $\KK_{G_1}$ and $\KK_{G_2}$ are isomorphic as filtered algebras  if and only if  $G_1$ and $G_2$ are isomorphic.

\end{theorem}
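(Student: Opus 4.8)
The plan is to reconstruct the labelled graph from the filtered algebra; the ``if'' direction is formal and the ``only if'' direction carries all the content. For ``if'', a graph isomorphism $G_1\to G_2$ is a vertex bijection $\sigma$ together with a matching of edge--multiplicities; choosing a compatible edge bijection and suitable signs gives an isomorphism $\Phi_{G_1}\to\Phi_{G_2}$, $\phi_e\mapsto\pm\phi_{\sigma(e)}$, carrying $X_i$ to $X_{\sigma(i)}$ and hence $Y_i=\exp(X_i)$ to $Y_{\sigma(i)}$, so it restricts to a filtered isomorphism $\KK_{G_1}\to\KK_{G_2}$. For ``only if'', let $\Psi\colon\KK_{G_1}\to\KK_{G_2}$ be a filtered isomorphism (we may assume $G_1,G_2$ connected, reducing to this case at the end). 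Taking associated graded algebras, $\operatorname{gr}\Psi$ is a graded isomorphism; since the leading terms of the generators $Y_i-1$ are the $X_i$, which generate $\CC_G$, and $\dim\KK_G=\dim\CC_G$ by Theorem~\ref{th:4alg}, we have $\operatorname{gr}\KK_G\cong\CC_G$, so by Theorem~\ref{nenashev} the matroids of $G_1$ and $G_2$ agree; in particular they have equally many edges and equal rank. Moreover $\KK_G\cong\CC_G$ is local Artinian with maximal ideal $\mathfrak m$, and $\Psi$ preserves the intrinsic data $V:=F_1\KK_G\cap\mathfrak m$ (the degree--one space $\operatorname{span}_{\K}\{X_i\}$) together with the nilpotency function $\nu(\xi):=\max\{k:\xi^{k}\neq0\}$ on it.

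I first record what must be reconstructed. For nonempty $S\subseteq V(G)$ put $\mathbf 1_S:=\sum_{i\in S}X_i\in V$; a direct computation in $\Phi_G$ gives $\nu(\mathbf 1_S)=D_S$. Since $D_{\{i\}}=\deg i$ and $D_{\{i,j\}}=\deg i+\deg j-2m_{ij}$, where $m_{ij}$ is the number of edges between $i$ and $j$, we obtain $m_{ij}=\tfrac12\bigl(\nu(X_i)+\nu(X_j)-\nu(X_i+X_j)\bigr)$, so the labelled graph is determined by the values of $\nu$ on the vertex vectors and their pairwise sums. It therefore suffices to prove that $\Psi$ carries the distinguished set of vertex vectors $\{X_i\}\subset V_1$ bijectively onto the vertex vectors of $G_2$, up to a common rescaling that the unique linear relation $\sum_iX_i=0$ forces to be uniform. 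Once $\Psi$ induces a vertex bijection $\tau$ in this way, preservation of $\nu$ yields $\nu(X_i+X_j)=\nu(X_{\tau(i)}+X_{\tau(j)})$, hence $m_{ij}=m_{\tau(i)\tau(j)}$ for all $i,j$, and $\tau$ is a graph isomorphism.

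The heart of the matter is thus that the set of vertex vectors is an invariant of the filtered algebra. At the graded level this is false: $\operatorname{gr}\Psi$ sees only the matroid, and by Whitney's $2$--isomorphism theorem a matroid isomorphism between connected graphs is realized by Whitney twists (together with identifications and cleavings at cut vertices), which in general move the cut function $S\mapsto D_S$ and so are not graph isomorphisms. The additional rigidity must come from the higher--order part of the relations: writing $\KK_G\cong\cD_G=\K[y]/\I_G$ via Theorem~\ref{th:main}, the generator $q_I=(\prod_{i\in I}y_i-1)^{D_I+1}$ equals its leading part $p_I=(\sum_{i\in I}x_i)^{D_I+1}$ (with $x_i=y_i-1$) plus strictly higher--order terms, so the requirement $\Psi(\I_{G_1})=\I_{G_2}$ constrains $\Psi$ well beyond $\operatorname{gr}\Psi$. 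The plan is to exploit this order by order and show that a nontrivial twist, which alters some cut size $D_S$, necessarily changes the nilpotency order of the corresponding unipotent $\prod_{i\in S}y_i$ and hence cannot be realized by any filtered isomorphism; this forces $\operatorname{gr}\Psi$ to respect the vertex vectors and $\tau$ to be a genuine graph isomorphism.

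I expect the main obstacle to be exactly this intrinsic recovery of the vertex vectors from purely filtered data, equivalently the statement that every filtered automorphism of $\KK_G$ induces a graph automorphism, rather than an arbitrary matroid automorphism, on $V$. The cleanest route I foresee is to single out the $X_i$ among all of $V$ by an extremality or indecomposability property of the unipotents $\exp(\xi)$ that becomes visible only through the filtered (not merely graded) multiplication, and then to check that this property is transported by $\Psi$. Two further points remain routine but must be addressed: the reduction to connected graphs, via recovering the tensor factorization $\KK_G\cong\bigotimes_c\KK_{G_c}$ over the connected components from the indecomposable factorization of the filtered algebra; and the role of the hypothesis on isolated vertices, which is genuinely needed since an isolated vertex $i$ has $X_i=0$ and $Y_i=1$ and so contributes nothing to $\KK_G$.
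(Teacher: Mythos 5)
Your proposal stops exactly where the theorem begins. Everything after ``The heart of the matter'' is a program --- ``the plan is\dots'', ``I expect\dots'', ``the cleanest route I foresee\dots'' --- and none of it is carried out: you never prove that a filtered isomorphism must send vertex vectors to vertex vectors (up to scaling), nor that a Whitney twist changes any filtered invariant. Since, as you yourself note via Theorem~\ref{nenashev}, the associated graded algebra sees only the matroid, this unproved step is not a technicality; it is the entire content of Theorem~\ref{th:Isom}.

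Moreover, the invariants you propose cannot run the plan as stated. First, $F_1\KK_G\cap\mathfrak m$ equals $\operatorname{span}_\K\{\tY_0,\dots,\tY_n\}$ with $\tY_i=\exp(X_i)-1$, \emph{not} $\operatorname{span}_\K\{X_0,\dots,X_n\}$; the first graded component of $\CC_G$ is not an invariant of the filtration. These spaces genuinely differ: for two vertices joined by two parallel edges $e_1,e_2$ one has $\tY_0+\tY_1=2\phi_{e_1}\phi_{e_2}\neq 0$, so $\operatorname{span}\{\tY_0,\tY_1\}$ is $2$-dimensional while $\operatorname{span}\{X_0,X_1\}$ is $1$-dimensional. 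Second, on the space that \emph{is} intrinsic, your nilpotency function fails to compute cuts: in the same example $\nu(\tY_0+\tY_1)=1$ although $D_{\{0,1\}}=0$, because the quadratic part of $\exp$ produces cross terms that survive precisely when the two vertices are adjacent --- the case you need. This is exactly the problem the paper's degree modulo \emph{reducible nilpotents} is designed to solve (Lemma~\ref{lem-degree}): one is allowed to subtract any sum of products of nilpotents before taking the nilpotency order, and only then does one obtain the edge counts of Lemma~\ref{pr-basis} (which, note, also requires pairwise \emph{distinct} coefficients $a_i$, a hypothesis your $\mathbf 1_S$, with all coefficients equal to $1$, violates once the $X_i$ are replaced by the intrinsic $\tY_i$). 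Finally, the paper's actual proof never recovers vertex vectors at all: it shows that each edge-element $\phi_e$ belongs to exactly two of the images $\tZ_i$ of the generators, with opposite coefficients, thereby obtaining an \emph{edge} bijection; it then checks that this bijection preserves multiple pairs, common vertices, stars and triangles (Proposition~\ref{prop:5claims}), and reconstructs the graph by the folklore Lemma~\ref{gr-edges}. So the goal you set yourself is simultaneously stronger than necessary and unsupported by the invariants you chose.
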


The structure of this paper is as follows.  In \S~2 we prove new results formulated above. In \S~3 we discuss Hilbert series of similar algebras defined by other sets of generators. In  \S~4  we discuss ``K-theoretic" analogs of algebras counting spanning trees. Finally, in \S~5 we present a number of open problems.

\medskip 
\noindent 
{\it Acknowledgements.} The study of $\cD_G$ and $\KK_G$  was initiated by the second author jointly with Professor A.~N.~Kirillov during his visit to Stockholm  in October-November 2009 supported by  the Swedish Royal Academy.  The second author is happy to acknowledge the importance of this visit for the present project and to dedicate this paper to Professor Kirillov.

\section {Proofs}


To prove Theorem~\ref{th:4alg}, we need some preliminary results.

\begin{lemma}\label{lem:isCK}
For any simple graph $G,$  algebras   $\CC_G$ and $\KK_G$  coincide as subalgebras of $\Phi_G$.
\end{lemma}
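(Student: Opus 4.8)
The plan is to establish the two inclusions $\KK_G\subseteq\CC_G$ and $\CC_G\subseteq\KK_G$ separately, using that on nilpotent elements of $\Phi_G$ the maps $\exp$ and $\log$ reduce to finite polynomial expressions and are mutually inverse. First I would record the structural fact underlying everything: the augmentation ideal $\Phi_G^{+}$, generated by all the $\phi_e$, is nilpotent. Indeed every monomial of $\Phi_G$ is square-free in the $\phi_e$, so any product of more than $|G|$ of the generators vanishes and $(\Phi_G^{+})^{|G|+1}=0$. In particular each degree-one element $X_i=\sum_{e\in G}c_{i,e}\phi_e$ lies in $\Phi_G^{+}$ and is nilpotent, and likewise each $Y_i-1=\prod_{e}(1+c_{i,e}\phi_e)-1\in\Phi_G^{+}$ is nilpotent. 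Throughout I take both $\CC_G$ and $\KK_G$ to be unital subalgebras; for $\KK_G$ this costs nothing, since by the relation $\sum_{i=0}^{n}X_i=0$ one has $\prod_{i=0}^{n}Y_i=\exp\!\left(\sum_{i=0}^{n}X_i\right)=1\in\KK_G$.

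For the inclusion $\KK_G\subseteq\CC_G$, I would use nilpotency of $X_i$ to write
$$Y_i=\exp(X_i)=\sum_{k\ge 0}\frac{X_i^{\,k}}{k!},$$
which is a finite sum (here characteristic zero is needed so that the $k!$ are invertible). Since $\CC_G$ is a unital subalgebra containing every $X_i$ for $i=1,\dots,n$, and hence also $X_0=-\sum_{i=1}^{n}X_i$, it contains each $Y_i$ for $i=0,\dots,n$; therefore $\KK_G\subseteq\CC_G$. For the reverse inclusion I would invert this relation by expanding the logarithm,
$$X_i=\log(Y_i)=\sum_{k\ge 1}\frac{(-1)^{k-1}}{k}\,(Y_i-1)^{k},$$
again a finite sum because $Y_i-1$ is nilpotent. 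The formal identity $\log\circ\exp=\mathrm{id}$ holds verbatim after substituting a nilpotent argument, so this genuinely recovers $X_i$. As $\KK_G$ is a unital subalgebra containing all the $Y_i$, it contains each $X_i$, whence $\CC_G\subseteq\KK_G$, and the two subalgebras coincide.

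The argument is essentially formal, so I do not expect a genuine obstacle; the points that need care are bookkeeping rather than mathematical depth. One must keep track of the unital convention (so that the constant term of $\exp$ and the $k=0$ behavior of $\log$ are available inside both algebras), of the hypothesis that $\K$ has characteristic zero (so the denominators $k!$ and $k$ make sense), and of the truncation of both series, which is exactly what nilpotency of $\Phi_G^{+}$ guarantees. The only step worth verifying explicitly is that the truncated $\exp$ and $\log$ are honest mutual inverses on $\Phi_G^{+}$, which follows by substituting nilpotent elements into the usual formal power series identities $\log(\exp(t))=t$ and $\exp(\log(1+t))=1+t$ over a field of characteristic zero.
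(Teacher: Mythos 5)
Your proposal is correct and follows essentially the same route as the paper's proof: expand $\exp$ as a finite sum to get $\KK_G\subseteq\CC_G$, then invert with the truncated logarithm applied to the nilpotent elements $Y_i-1$ to get $\CC_G\subseteq\KK_G$. The only cosmetic difference is that you use the global bound $(\Phi_G^{+})^{|G|+1}=0$ where the paper uses the sharper per-vertex bound $(X_i)^{d_i+1}=0$; your added bookkeeping (unitality via $\prod_i Y_i=1$, membership of $X_0$, characteristic zero) is sound and slightly more careful than the original.
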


\begin{proof}
Since $(X_i)^{d_i+1}=0,$ where $d_i$ is the degree of vertex $i$, then 
$$Y_i=\exp(X_i)=1+\sum_{j=1}^{d_i}\frac{(X_i)^j}{j!}.$$ Hence $Y_i\in \CC_G$ which means that $\KK_G\subset \CC_G \subset \Phi_G$.

To prove the opposite inclusion, consider $\widetilde{Y}_i=Y_i-1 =\exp (X_i) -1$. Since $X_i|\widetilde{Y}_i$, we get $$(\tY_i)^{d_i+1}=0.$$ 
Using the relation $X_i=\ln(1+\widetilde{Y}_i)=\sum_{j=1}^{d_i} \frac{(-1)^{j-1}(\widetilde{Y}_i)^j}{j!}$, we conclude $X_i\in\KK_G$.
Thus  $\CC_G\subset \KK_G$, implying that  $\CC_G$ and $\KK_G$ coincide.
\end{proof}

\begin{lemma} \label{lem:isBD}
For any simple graph $G,$  algebras   $B_G$  and $\cD_G$  are  isomorphic as (non-filtered) algebras.
\end{lemma}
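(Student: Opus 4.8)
The plan is to realize the isomorphism through the same exponential/logarithm dictionary that underlies Lemma~\ref{lem:isCK}, now applied at the level of presentations: here $B_G$ plays the role of the commutative model of $\CC_G$ (with $x_i$ corresponding to $X_i$) and $\cD_G$ the model of $\KK_G$ (with $y_i$ corresponding to $Y_i$). First I would put both algebras on the same footing by using the alternative $(n+1)$-variable presentation of $B_G$ given in the Remark, so that $B_G=\K[x_0,\dots,x_n]/\J_G$ and $\cD_G=\K[y_0,\dots,y_n]/\I_G$ share the generating index set $\{0,\dots,n\}$; in particular the full-set relations read $\sum_{i=0}^n x_i=0$ and $\prod_{i=0}^n y_i=1$, and these correspond to each other under $y_i=\exp(x_i)$. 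The single-vertex relations $p_{\{i\}}=x_i^{d_i+1}$ and $q_{\{i\}}=(y_i-1)^{d_i+1}$ guarantee that each $x_i$ is nilpotent in $B_G$ and each $y_i-1$ is nilpotent in $\cD_G$. Consequently $\exp(x_i)=\sum_{j\ge0}x_i^{j}/j!$ and $\log(y_i)=\sum_{j\ge1}(-1)^{j-1}(y_i-1)^{j}/j$ are finite sums and define honest elements of the respective algebras; this is the one place where $\mathrm{char}\,\K=0$ enters.

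Next I would introduce the two algebra homomorphisms $\psi\colon\K[y_0,\dots,y_n]\to B_G$, $y_i\mapsto\exp(x_i)$, and $\chi\colon\K[x_0,\dots,x_n]\to\cD_G$, $x_i\mapsto\log(y_i)$, and check that each annihilates the defining ideal of its target. For $\psi$, commutativity gives $\prod_{i\in I}\exp(x_i)=\exp(s_I)$ with $s_I:=\sum_{i\in I}x_i$, so $\psi(q_I)=(\exp(s_I)-1)^{D_I+1}$. The crucial observation is the factorization $\exp(s_I)-1=s_I\,u_I$, where $u_I=\sum_{k\ge0}s_I^{k}/(k+1)!$ is a polynomial in the nilpotent element $s_I$ with constant term $1$, hence a unit in $B_G$; therefore $\psi(q_I)=s_I^{D_I+1}u_I^{D_I+1}=p_I\,u_I^{D_I+1}=0$. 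Symmetrically, writing $t_I:=\prod_{i\in I}y_i-1$ (nilpotent, since $t_I^{D_I+1}=q_I=0$ in $\cD_G$) and using $\sum_{i\in I}\log(y_i)=\log(1+t_I)=t_I\,u_I'$ with $u_I'$ a unit, one gets $\chi(p_I)=(\log(1+t_I))^{D_I+1}=t_I^{D_I+1}(u_I')^{D_I+1}=0$. Hence $\psi$ and $\chi$ descend to $\overline{\psi}\colon\cD_G\to B_G$ and $\overline{\chi}\colon B_G\to\cD_G$.

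It then remains to verify that $\overline{\psi}$ and $\overline{\chi}$ are mutually inverse, which I would do on generators using the formal identities $\exp(\log(1+t))=1+t$ and $\log(\exp(w))=w$ specialized to nilpotent arguments. Indeed $\overline{\chi}(\overline{\psi}(y_i))=\overline{\chi}(\exp(x_i))=\exp(\overline{\chi}(x_i))=\exp(\log(y_i))=y_i$, and likewise $\overline{\psi}(\overline{\chi}(x_i))=\log(\exp(x_i))=x_i$. Since the $y_i$ generate $\cD_G$ and the $x_i$ generate $B_G$, both compositions are the identity, so $\overline{\psi}$ is an isomorphism of algebras and $B_G\cong\cD_G$.

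The only real obstacle is bookkeeping: confirming that the substitution $y_i\mapsto\exp(x_i)$ carries the ideal $\I_G$ exactly onto $\J_G$ and back under $x_i\mapsto\log(y_i)$. This hinges entirely on the two unit-factorizations $\exp(w)-1=w\cdot(\text{unit})$ and $\log(1+t)=t\cdot(\text{unit})$ together with the single-vertex relations that force nilpotency; granting these, the remaining verifications are automatic. I expect no difficulty beyond this, and I note that the construction is intrinsically non-graded (the substitutions mix degrees), consistent with the claim being only an isomorphism of non-filtered algebras.
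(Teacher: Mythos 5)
Your proof is correct, and its engine is exactly the paper's: the substitution $y_i \leftrightarrow \exp(x_i)$ combined with the unit factorization $\exp(s)-1 = s\cdot(\text{unit})$, where the single-vertex relations supply the nilpotency that makes $\exp$ and $\log$ finite sums. The difference is in the packaging. The paper first replaces both algebras by quotients of formal power series rings (justified by the relations $\widetilde{y}_i^{\,d_i+1}$ and $x_i^{d_i+1}$ lying in the ideals), then exhibits a single ambient isomorphism $\psi\colon \K[[\widetilde{y}_0,\dots,\widetilde{y}_n]]\to\K[[x_0,\dots,x_n]]$, $\widetilde{y}_i\mapsto e^{x_i}-1$, and checks that each generator $\widetilde{q}_I$ goes to $p_I$ times an invertible series; since $\psi$ is invertible at the level of ambient rings, the single computation $\psi(\widetilde{q}_I)=p_I\cdot(\text{unit})$ already yields $\psi(\widetilde{\I}_G)=\widetilde{\J}_G$, and no separate inverse verification is needed. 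You instead stay inside the polynomial quotients: you define $\overline{\psi}\colon\cD_G\to B_G$ and $\overline{\chi}\colon B_G\to\cD_G$ directly, check that each kills the defining ideal of its source, and then verify on generators that the two maps are mutually inverse. This costs you a second, symmetric family of computations ($\chi(p_I)=0$) plus the inverse check, but it buys independence from the power-series formalism: you never need the step --- stated somewhat casually in the paper --- that the power-series quotient coincides with the polynomial quotient, since in your argument the nilpotency of $x_i$ in $B_G$ and of $y_i-1$ in $\cD_G$ does that work explicitly. Both arguments are complete; yours is marginally longer but more self-contained.
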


\begin{proof}
First we change the variables  in $\cD_G$ by using $\widetilde{y_i}=y_i-1$,   $i=0,1,\dots,n$.
The generators of  ideal $\I_G$  transform as  $$\widetilde{q}_I=\left( \prod_{i\in I}(\widetilde{y_i}+1)-1\right)^{D_I+1},$$
for any subset $I\subset \{0,1,\dots,n\}$.

Since for every vertex $i=0,1,\dots,n,$  $$((\widetilde{y_i}+1)-1)^{d_i+1}=\widetilde{y_i}^{d_i+1},$$
we can consider $\cD_G$ as the quotient $\K[[\widetilde{y_0},\dots,\widetilde{y_n}]]/\widetilde{\I}_G$  of the ring of formal power series factored by the ideal $\widetilde{\I}_G$   generated by all $\widetilde{q}_I$.

Similarly we can consider $B_G$ as the quotient $\K[[x_0,\dots,x_n]]/\widetilde{\J}_G$  of the ring of formal power series by the  ideal $\widetilde{\J}_G$  generated by all $p_I$.

Introduce the homomorphism $\psi: \; \K[[\widetilde{y}_0,\dots,\widetilde{y}_n]] \mapsto K[[x_0,\dots,x_n]]$ defined by:  
$$\psi:\widetilde{y}_i\to {e^{x_i}-1}.$$ In fact, $\psi$ is an isomorphism, because $\psi^{-1}$ is defined by $x_i\to \ln(1+\widetilde{y}_i)$.

\smallskip
Let us look at  what happens with the ideal $\widetilde{\I}_G$ under the action of $\psi$.  For a given  $I\subset \{0,1,\dots,n\}$, consider the  generator $\widetilde{q}_I.$ Then,  
$$
\psi(\widetilde{q}_I)=\left( \prod_{i\in I}(\psi(\widetilde{y_i})+1)-1\right)^{D_I+1}=\left( \prod_{i\in I}e^{x_i}-1\right)^{D_I+1}=$$

$$=\left( \exp \left(\sum_{i\in I} x_i\right)-1\right)^{D_I+1}=\left(\sum_{i\in I} x_i\right)^{D_I+1}\cdot \left( \frac{\exp \left(\sum_{i\in I} x_i\right)-1}{\sum_{i\in I} x_i}\right)^{D_I+1}.
$$
The factor $\frac{\exp \left(\sum_{i\in I} x_i\right)-1}{\sum_{i\in I} x_i}$ is a formal power series starting with the constant term $1$. Hence the last factor in the right-hand side of the latter expression  is an invertible power series. Thus, the generator $\widetilde{q}_I$ is mapped by $\psi$ to the product $p_I\cdot *,$ where $*$ is an invertible series. This  implies $\psi(\widetilde{\I}_G)=\widetilde{\J}_G$. Hence algebras $\cD_G$ and $B_G$ are isomorphic.
\end{proof}\

\begin{proof}[Proof of Theorem~\ref{th:4alg}] 
By Lemmas~\ref{lem:isCK},~\ref{lem:isBD} and Theorem~\ref{th:forests}, we get that all four  algebras are isomorphic to each other. Furthermore, by Theorem~\ref{th:forests}, we know that their total dimension over $\K$ is the number of subforests in $G$.
\end{proof}\

Theorem~\ref{th:main} now follows from Theorem~\ref{th:4alg}.
\begin{proof}[Proof of Theorem~\ref{th:main}] 
Consider the surjective homomorphism $h:\cD_G\to \KK_G$, defined by:  
$$h(y_i)=Y_i,\; i=0,1,\dots,  n.$$ 
(It is indeed a  homomorphism because every relation $q_I$ holds for $Y_0,\dots,Y_n$.) By Theorem~\ref{th:4alg} we know that these algebras have the same dimension, implying that $h$ is an isomorphism. It is clear that $h$ preserves the filtration.
\end{proof}\

\subsection{Proving Theorem~\ref{th:Isom}} 


We start with a few definitions.
 
Given a commutative algebra $A$, its element $t\in A$ is called {\it reducible nilpotent} if and only if there exists a presentation $t=\sum u_iv_i$, where all $u_i,v_i$ are nilpotents.

For a nilpotent element  $t\in A$, define its {\it degree} $d(t)$   as the minimal non-negative integer for which there exists a reducible nilpotent element $h\in A$ such that $$(t-h)^{d+1}=0.$$

Given an element  $R\in \Phi_G$, we say that an edge-element $\phi_e$ {\it belongs to $R$}, if monomial $\phi_e$ has a non-zero  coefficient in the expansion of $R$ as the sum of square-free monomials in $\Phi_G$. 

\begin{lemma}
\label{lem-degree}
For any nilpotent element $R\in \KK_G \subset \Phi_{G}$,  the degree $d(R)$ of  $R$ equals the  number of edges of $G$ belonging to $R$.
\end{lemma}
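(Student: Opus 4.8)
The plan is to prove the two inequalities $d(R)\le b$ and $d(R)\ge b$ separately, where $b$ denotes the number of edges belonging to $R$. Throughout I grade $\Phi_G$ by the number of edge-variables (call it the $\phi$-degree), write $\mathfrak m=\bigoplus_{k\ge1}\Phi_G^{(k)}$ for its augmentation (maximal) ideal, and decompose $R=R_1+R_2+\cdots$ into $\phi$-homogeneous parts. The first thing I would record is the structural observation that governs everything: evaluating all $\phi_e$ at $0$ is an algebra homomorphism $\Phi_G\to\K$, so a nilpotent element has zero constant term and hence lies in $\mathfrak m$; consequently a reducible nilpotent, being a sum of products of two such elements, lies in $\mathfrak m^2=\bigoplus_{k\ge2}\Phi_G^{(k)}$. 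In particular, subtracting a reducible nilpotent $h$ from $R$ never changes the $\phi$-degree-one part: $(R-h)_1=R_1=\sum_{e\in S}a_e\phi_e$, where $S$ is precisely the set of edges belonging to $R$ and all $a_e\ne0$, so $|S|=b$.

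For the lower bound $d(R)\ge b$, I would fix an arbitrary reducible nilpotent $h$ and examine the lowest $\phi$-degree part of $(R-h)^b$. Since every $\phi$-homogeneous component of $R-h$ has degree $\ge1$ and the degree-one component is $R_1$, the $\phi$-degree-$b$ component of $(R-h)^b$ equals $(R_1)^b$. Expanding and using $\phi_e^2=0$ together with commutativity, only products of $b$ distinct edge-variables survive, and since $|S|=b$ one gets $(R_1)^b=b!\bigl(\prod_{e\in S}a_e\bigr)\phi_S$ with $\phi_S=\prod_{e\in S}\phi_e$. Here the hypothesis that $\K$ has characteristic zero is essential: it guarantees $b!\ne0$, so $(R_1)^b\ne0$ and therefore $(R-h)^b\ne0$. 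As this holds for every reducible nilpotent $h$, the nilpotency index of $R-h$ always exceeds $b$, whence $d(R)\ge b$.

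For the upper bound $d(R)\le b$, I would exploit that $\KK_G$ coincides with the graded algebra $\CC_G$ (Lemma~\ref{lem:isCK}). Being generated by the $\phi$-homogeneous degree-one elements $X_i$, the subalgebra $\CC_G$ is graded with respect to the $\phi$-degree, so each component $R_k$ again lies in $\CC_G=\KK_G$; in particular $R_1\in\KK_G$ and the candidate $h:=R-R_1=\sum_{k\ge2}R_k$ lies in $\KK_G$. Moreover $\bigoplus_{k\ge2}(\CC_G)_k=(\CC_G)_{+}\cdot(\CC_G)_{+}$, because any monomial $X_{i_1}\cdots X_{i_k}$ with $k\ge2$ factors as $X_{i_1}\cdot(X_{i_2}\cdots X_{i_k})$, a product of two nilpotents; hence $h$ is a reducible nilpotent. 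Finally $R-h=R_1$, and $(R_1)^{b+1}=0$ since one cannot form a nonzero square-free product of $b+1$ distinct edge-variables out of the $b$ edges in $S$. Thus $(R-h)^{b+1}=0$, giving $d(R)\le b$ and, together with the previous step, $d(R)=b$.

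I expect the only genuinely delicate point to be the upper bound, specifically the need to realize $R-R_1$ as an honest reducible nilpotent inside $\KK_G$; this is exactly where the identification $\KK_G=\CC_G$ and the resulting $\phi$-grading of $\KK_G$ do the work, letting me split off the degree-one part while staying in the algebra. The lower bound, by contrast, is a clean degree count whose single subtle ingredient is the characteristic-zero hypothesis ensuring $(R_1)^b\ne0$.
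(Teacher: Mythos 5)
Your proof is correct and follows essentially the same route as the paper: identify $\KK_G$ with the graded algebra $\CC_G$ via Lemma~\ref{lem:isCK}, split off the $\phi$-degree-one part $R_1$, observe that the remainder is a reducible nilpotent, and reduce the degree computation to $R_1$. The only difference is that you carefully spell out the two steps the paper dismisses as obvious --- that reducible nilpotents have no linear part (so no $h$ can kill $R_1$, giving the lower bound via the characteristic-zero computation $(R_1)^b=b!\bigl(\prod_{e\in S}a_e\bigr)\phi_S\neq 0$), and that $R-R_1$ is genuinely reducible inside $\KK_G$ --- which is a welcome tightening rather than a new idea.
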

\begin{proof}
We can write $R$ in terms of $\{X_0,\ldots,X_n\}$. (Observe that $\KK_G$ and $\CC_G$ coincide as subsets of $\Phi_G$,  but have different graded/filtered structures). Now we can concentrate on the graded structure of $\CC_G$.  Select the part of $R$ which lies in the first graded component of $\CC_G$. Thus $$R=R_1+R'=\sum_{i=0}^na_iX_i+R',$$ where $R'$ is reducible nilpotent because it belongs to the linear span of other graded components.   
  Thus $d(R)=d(R_1)$. The statement of Lemma~\ref{lem-degree} is obvious for $R_1.$ Additionally   by construction,  an edge-element $\phi_e$ belongs to $R$ if and only if it belongs to~$R_1$.
\end{proof}\


\begin{lemma}
\label{pr-basis}
Given  a graph $G$, let $\{ \tY_0,\ldots,\tY_n \}$ be the set of generators of $\KK_G$ corresponding to the vertices (i.e., $\widetilde{Y}_i=\exp(X_i)-1$). Then 
\begin{enumerate}
\item $\{\tY_0,\ldots,\tY_n\}$ are nilpotents;
\item $\sum_{i=0}^n \ln(1+\tY_i)=0$;
\item for any subset $I\subset [0,n]$ and any set of pairwise distinct non-zero numbers $a_i\in \K$ ($i\in I$), 
the degree $d(\sum_{i\in I} a_i \widetilde{Y_i})$ is equal to the number of edges incident to the vertices belonging to $I$;
\item the number of edges between vertices $i$ and $j$ equals to $\frac{d(\tY_i)+d(\tY_j)-d(\tY_i+\tY_j)}{2}$.
\end{enumerate}
\end{lemma}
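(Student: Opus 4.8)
The plan is to treat the four statements in turn, leaning throughout on Lemma~\ref{lem-degree}, which reduces the computation of $d(R)$ to counting the edge-elements $\phi_e$ that belong to $R$, together with the observation recorded in that lemma's proof that $\phi_e$ belongs to $R$ exactly when it belongs to the degree-one component $R_1$ of $R$ written in terms of the $X_i$.

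For (1) I would note that each $X_i$ lies in the augmentation ideal of the finite-dimensional algebra $\Phi_G$, which is generated by square-zero elements, hence $X_i$ is nilpotent; therefore $\tY_i=\exp(X_i)-1$ is nilpotent as well. (This was in effect already observed in the proof of Lemma~\ref{lem:isCK}, where $(\tY_i)^{d_i+1}=0$.) For (2), since $1+\tY_i=\exp(X_i)$ we have $\ln(1+\tY_i)=X_i$, so the assertion reduces to $\sum_{i=0}^n X_i=0$; expanding $X_i=\sum_e c_{i,e}\phi_e$ and using that $c_{u,e}+c_{v,e}=0$ for every edge $e=(u,v)$, which is the defining antisymmetry of \eqref{eq:def}, gives the vanishing coefficientwise.

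The substance is in (3). For $R=\sum_{i\in I}a_i\tY_i$ the degree-one part is $R_1=\sum_{i\in I}a_iX_i=\sum_e\bigl(\sum_{i\in I}a_ic_{i,e}\bigr)\phi_e$, so by Lemma~\ref{lem-degree} it suffices to decide, for each edge $e$, whether the coefficient $\sum_{i\in I}a_ic_{i,e}$ is nonzero. I would run the case analysis on the endpoints $u<v$ of $e$: if both lie in $I$ the coefficient is $a_u-a_v$, nonzero since the $a_i$ are pairwise distinct; if exactly one endpoint lies in $I$ the coefficient is $\pm a_u$, nonzero since the $a_i$ are nonzero; and if neither endpoint lies in $I$ the coefficient is $0$. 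Hence the edges belonging to $R$ are precisely those incident to a vertex of $I$, and $d(R)$ equals their number. This is exactly where the two hypotheses earn their keep: distinctness disposes of the internal edges of $I$, nonvanishing of the boundary edges.

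Finally, (4) follows by specialising (3): taking singletons gives $d(\tY_i)=d_i$ and $d(\tY_j)=d_j$, the ordinary vertex degrees. For $d(\tY_i+\tY_j)$ one cannot invoke (3) directly, since the coefficients $a_i=a_j=1$ are not distinct --- and this is the whole point. Repeating the coefficient computation for $X_i+X_j$, every edge joining $i$ and $j$ now acquires coefficient $c_{i,e}+c_{j,e}=0$ and drops out of $R_1$, while edges meeting exactly one of $i,j$ survive; thus the edges belonging to $\tY_i+\tY_j$ are those incident to $i$ or $j$ but not between them, numbering $d_i+d_j-2m_{ij}$, where $m_{ij}$ is the number of $i$--$j$ edges. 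Subtracting, $d(\tY_i)+d(\tY_j)-d(\tY_i+\tY_j)=2m_{ij}$, which is the claimed formula. I expect the only genuine obstacle to be conceptual rather than technical: recognising that the coincidence of coefficients forbidden in (3) is precisely the mechanism that makes the multiple edges between $i$ and $j$ cancel in the linear part, and that this cancellation, forced by the antisymmetric sign convention of \eqref{eq:def}, is what lets the degree difference detect edge multiplicities.
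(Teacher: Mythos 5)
Your proof is correct and takes essentially the same approach as the paper's: both reduce all the degree computations via Lemma~\ref{lem-degree} to deciding which edge-elements $\phi_e$ carry a nonzero coefficient in the sum, with distinctness of the $a_i$ handling edges internal to $I$, nonvanishing handling boundary edges, and the sign cancellation $c_{i,e}+c_{j,e}=0$ yielding part (4). The only cosmetic difference is that you run the coefficient count through the linear part $\sum a_iX_i$, while the paper phrases it as edges belonging to one or two of the $\tY_i$ with opposite signs --- the identical computation.
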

\begin{proof}

Item (1) is obvious. 

To settle (2), observe that $\ln(1+\widetilde{Y_i})=X_i$ which implies   
$$\sum_{i=0}^n \ln(1+\widetilde{Y_i})=\sum_{i=0}^n X_i=0.$$

To prove (3), notice that, by  Lemma~\ref{lem-degree}, the degree $d(\sum_{i\in I} a_i \widetilde{Y}_i)$ is equal to the number of edges belonging to the sum $\sum_{i\in I} a_i \widetilde{Y}_i$. 
Each edge belongs  either to zero, to one or to two generators $\widetilde{Y}_i$ from the latter sum. Moreover,  if an edge belongs  to two generators,  then it has  coefficients of opposite signs. Since all $a_i$ are different,  an edge-element $\phi_e$ belongs to $\sum_{i\in I} a_i \widetilde{Y}_i$ if and only if it belongs to at least one $\widetilde{Y}_i,$ for $i\in I$. 
Thus the degree $d(\sum_{i\in I} a_i \widetilde{Y}_i)$ is the number of edges incident to all vertices from $I$.

To settle (4), notice that if $e$ is an edge between vertices $i$ and $j$, then $\phi_e$ belongs to $\tY_i$ and to $\tY_j$ with the opposite coefficients. Therefore $\phi_e$  does not belong to $(\tY_i+\tY_j)$. 
Using Lemma~\ref{lem-degree}, we get that $d(\tY_i)+d(\tY_j)-d(\tY_i+\tY_j)$ equals twice the number of edges between $i$ and $j$.
\end{proof}\

Our proof of Theorem~\ref{th:Isom} uses the following technical lemma which should be obvious to the specialists. 
\begin{lemma}[Folklore] 
\label{gr-edges}
Let $E$ be the set of  edges of some graph $G$  without isolated vertices.
If we know the following information: 
\begin{enumerate}
\item  which pairs $e_i,e_j\in E$ of edges are multiple, i.e., connect the same pair of vertices;
\item  which pairs $e_i,e_j\in E$ of edges have exactly one common vertex;
\item which triples $e_i,e_j,e_k\in E$ of edges form a triangle,
\end{enumerate}
 then we can reconstruct $G$ up to an isomorphism.
\end{lemma}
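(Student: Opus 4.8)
The plan is to reconstruct the incidence structure of $G$ by clustering edges around their common vertices, the only genuine difficulty being to decide, for two edges that both meet a third edge $e$, whether they meet $e$ at the same endpoint or at opposite endpoints. First I would reduce to the simple case: using datum (1), partition $E$ into classes of mutually parallel edges and pick one representative from each class, obtaining a simple graph $G_0$ on the same vertex set. Two representatives share exactly one vertex (resp.\ form a triangle) in $G_0$ precisely when the corresponding edges do so in $G$, so data (2) and (3) restrict faithfully to the representatives, and the multiplicity of each edge of $G_0$ is recorded by the size of its parallel class. Thus it suffices to reconstruct $G_0$, after which we reattach the known multiplicities and recover $G$ up to isomorphism. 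Throughout, call two edges \emph{coincident} if they share a vertex; by (1) and (2) coincidence is determined by the data.

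Next I would carry out the key local step in $G_0$. Fix an edge $e=(u,v)$ and let $N(e)$ be the set of edges coincident with $e$ (each meeting $e$ in exactly one of $u,v$). I claim that two edges $f,g\in N(e)$ meet $e$ at the \emph{same} endpoint if and only if $f,g$ are coincident and the triple $\{e,f,g\}$ does \emph{not} form a triangle. Indeed, if $f=(u,a)$ and $g=(u,b)$ both meet $e$ at $u$, then $a\ne b$ in a simple graph, so $f,g$ are coincident via $u$ while all three edges share $u$ and hence cannot form a triangle. If instead $f=(u,a)$ and $g=(v,b)$ meet $e$ at opposite endpoints, then the only way for them to be coincident is $a=b$, in which case $\{e,f,g\}$ is exactly the triangle on $u,v,a$; and if they are not coincident they are again at opposite endpoints. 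Consequently the stated relation coincides with ``meeting $e$ at the same endpoint,'' which is an equivalence relation with at most two classes, splitting $N(e)$ into the edges at $u$ and the edges at $v$ (one class being empty when an endpoint is a leaf). This produces, for every edge $e$, a canonical unordered pair of \emph{incidences}.

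Finally I would glue the incidences into vertices. Each edge $e$ carries exactly two incidences, one per endpoint, given by the two classes of $N(e)$ (one possibly the empty class). I glue an incidence of $e$ given by a class $S\subseteq N(e)$ to an incidence of an edge $f$ given by a class $T\subseteq N(f)$ whenever $f\in S$ and $e\in T$; two coincident edges sharing an endpoint produce incidences glued in exactly this way. The transitive closure of this relation partitions the incidences into classes, which I take to be the vertices of $G_0$, and each edge joins the two classes of its two incidences. Since $G$ has no isolated vertices, every vertex arises this way, so the construction returns $G_0$, and reattaching the recorded multiplicities yields $G$ up to isomorphism. The main obstacle is precisely the local step of the previous paragraph: from coincidence data alone one cannot distinguish a triangle from a three-edge star (the classical Whitney ambiguity between $K_3$ and $K_{1,3}$), and it is exactly datum (3) that breaks this symmetry and makes the endpoint partition of each $N(e)$ well defined; once this is in place, the global gluing is routine bookkeeping.
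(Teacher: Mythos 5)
Your proof is correct, and it is organized genuinely differently from the paper's. The paper argues by comparing two graphs: it takes $G$ and $G'$ with an edge bijection preserving the data and extends it to a vertex bijection directly --- every vertex $v$ lying on two non-multiple edges $e_i,e_j$ is sent to the common vertex of the image edges $e_i',e_j'$, with the triangle datum guaranteeing independence of the chosen pair (if $e_k'$ met $e_i'$ and $e_j'$ at different vertices, the three images would form a triangle while the originals all pass through $v$), and the remaining vertices (those whose incident edges are all mutually multiple, i.e.\ vertices with a single neighbour) are treated by an explicit two-case analysis. You instead give an intrinsic reconstruction: factor out parallel classes to reduce to a simple graph, then rebuild each vertex as an equivalence class of edge-ends, using the local criterion that $f,g$ meet $e$ at the same endpoint iff they are coincident and $\{e,f,g\}$ is not a triangle. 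The mathematical heart is the same in both arguments --- datum (3) breaking the $K_3$ versus $K_{1,3}$ ambiguity that coincidence data alone cannot resolve --- but the surrounding architecture differs. What your route buys: multiple edges and pendant/degenerate vertices are handled uniformly (by the initial reduction and by empty incidence classes) instead of by case distinctions, and you obtain a canonical reconstruction procedure, not merely uniqueness. What the paper's route buys: it produces the isomorphism between the two given graphs explicitly and avoids the formal apparatus of incidence classes and transitive closure; one small point worth making explicit in your write-up is that a gluing $(e,S)\sim(f,T)$ can only occur between incidences at the common vertex of $e$ and $f$, so the transitive closure never merges distinct vertices --- this is immediate in a simple graph, and it is what makes your final "routine bookkeeping" valid.
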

\begin{proof}
Assume the contrary, i.e.,  that there exist  two non-isomorphic graphs $G$ and $G'$ such that there exists  a bijection $\psi$ of their edge sets $E$ and $E'$ preserving (1) - (3). 
 Assume that under this bijection an  edge $e\in E$ corresponds to the edge $e'\in E'$.
Additionally assume that $|V(G')|\geq |V(G)|$.

Now we construct an isomorphism between $G$ and $G'$. Let us split the vertices of  $G$  into two subsets: $V(G)=\widehat{V}(G)\cup \tilde{V}(G),$ where $\widehat{V}(G)$ are all vertices which are incident to some pair of non-multiple edges.

Let us construct a bijection  $\psi$ between the vertices of $G$ and $G'$, which extends the given bijection $\psi$ of edges, i.e., for any $e=uv=\in E$, $e'=\psi(e)=\psi(u)\psi(v).$

At first  we define it on $\widehat{V}(G)$. Namely,  given a  vertex $v\in \widehat{V}(G),$  choose two non-multiple edges  $e_i$ and $e_j$ incident to it, and define $\psi(v)$ as a common vertex of $e_i'$ and $e_j'.$
We need to show that  $\psi(v)$ does not depend on the  choice of $e_i$ and $e_j$.  It is enough to check it for  a pair $e_i$ and $e_k\neq e_j$, where $e_k$ is another edge incident to $v$.
Indeed, if $e_k'$ has no common vertex with both $e'_i$ and $e'_j$, then $e'_i$, $e'_j$ and $e'_k$ form a triangle in $G'$ (because $e'_k$ has a common vertex with $e'_i$ and with $e'_j$). Hence,  $e_i$, $e_j$ and $e_k$ form a triangle in $G$, but they  have a common vertex $v$. Contradiction. 

Now we need to extend  $\psi$ to vertices belonging to  $\tilde{V}(G_1)$. Note that each vertex $v\in \tilde{V}(G_1)$ has exactly one adjacent vertex. There are two possibilities. 

{$1^\circ$} {\it Adjacent vertex $u$ of $v$ belongs to $\widehat{V}(G)$.} Consider the edge $e_{uv}\in E.$  (There might be several such edges, but this is not important, because in $G'$ they are also multiple.) Knowing the image $\psi(e_{uv})$ and the vertex $\psi(u)$, we  define $\psi(v)$ as the vertex of $\psi(e_{uv})$ different from $\psi(u)$.

{$2^\circ$} {\it Adjacent vertex $u$ of $v$ belongs to $\tilde{V}(G)$.} Consider edge $e_{uv}\in E$ 
Knowing $\psi(e_{uv})$, we define $\psi(u)$ and $\psi(v)$ as the vertices of edge~$\psi(e_{uv})$ (not important which is mapped  to which).

Since $G'$ has no isolated vertices and each edge $e'$ has exactly  two incident vertices from $\psi(V)$, we get that 
$\psi:G\to G'$ is surjective. Hence, $\psi:G\to G'$ is an isomorphism (otherwise it must be non-injective on vertices and, hence, $|V(G)|>|V(G')|$). Therefore $G$ and $G'$ are isomorphic.
\end{proof}

\begin{proof}[Proof of Theorem~\ref{th:Isom}] 

Let $G$ and $G'$ be a pair of graphs such that their filtered algebras $\KK_G$ and $\KK_{G'}$ are isomorphic. Without loss of generality, we can assume that $|E(G)|\leq |E(G')|.$ 
Denote the numbers of vertices  in $G$ and  $G'$ by $n+1$ and $n'+1$   resp.

We consider  $\KK_G$ as a subalgebra in $\Phi_{G}$. The elements $\widetilde{Y}_i=\exp(X_i)-1,\; i\in[0,n]$ form  a set of  generators of  $\KK_G$. Denote by $\widetilde{Z}_i \in \KK_G,\;  i\in[0,n']$  the elements  corresponding to the vertices of $G'$ under the isomorphism of filtered algebras. The set $\{\widetilde{Z}_i, i\in[0,n']\}$ is also a generating set for  $\KK_G$ which  gives the same filtered structure and satisfies the assumptions of  Lemma~\ref{pr-basis}.   
In order to avoid confusion, we call  $\widetilde{Y}_i$  the {\it $i$-th vertex of graph $G$}, and we call  $\widetilde{Z}_j$  the {\it $j$-th vertex of graph $G'$}.

Since $\widetilde{Y}_i,\; i\in[0,n]$ and $\widetilde{Z}_i,\; i\in[0,n']$ determine the same graded structure, then, in particular, 
$$\text{span}\{1,\tY_0,\ldots,\tY_n\}=\text{span}\{1,\tZ_0,\ldots,\tZ_{n'}\}.$$ 
Additionally,  by Lemma~\ref{pr-basis},  $\widetilde{Y}_i,\; i\in[0,n]$ and $\widetilde{Z}_i,\; i\in[0,n']$ are nilpotents, implying that 
$$\text{span}\{\tY_0,\ldots,\tY_n\}=\text{span}\{\tZ_0,\ldots,\tZ_{n'}\}.$$

Firstly, we need to show that each edge-element $\phi_e$ belongs to at most  two different $\widetilde{Z}_i$'s. 
Assume the contrary, i.e., that $\phi_e$ belongs  to $\tZ_i, \tZ_j$ and $\tZ_k$. Then there exist three distinct non-zero coefficients $r_1,r_2,r_3 \in \K$ such that  $\phi_e$ does not belong to $r_1\tZ_i+r_2\tZ_j+r_3\tZ_k$. 
Moreover, for generic distinct non-zero coefficients $r_1',r_2',r_3' \in \K$, element  $\phi_{e'}$ ($e'\in E(G)$) belongs to $r_1'\tZ_i+r_2'\tZ_j+r_3'\tZ_k$
if and only if $\phi_{e'}$ belongs to at least one of $\tZ_i, \tZ_j$ and $\tZ_k$. Hence by Lemma~\ref{lem-degree}, $$d(r_1\tZ_i+r_2\tZ_j+r_3\tZ_k)<d(r_1'\tZ_i+r_2'\tZ_j+r_3'\tZ_k).$$ But at the same time, by Lemma~\ref{pr-basis}~(3), they should coincide, contradiction.

By Lemma~\ref{pr-basis},  for any $i\in[0,n']$, the degree $d(\widetilde{Z}_i)$ equals to the valency of $\widetilde{Z}_i$. Therefore,  
$$2|E(G')|=\sum_i^{n'}d(\widetilde{Z}_i)\leq 2|E(G)|,$$
because each edge-element  is included in at most two $\widetilde{Z}_i$. Since $|E(G)|\leq |E(G')|$,  we conclude that  $|E(G)|= |E(G')|.$ 
Furthermore, by Lemma~\ref{pr-basis} (2), each element $\phi_e,\; e\in E(G)$ belongs exactly to two vertices from $\tZ_i,\; i\in[0,n']$ with the opposite coefficients. Since  $|E(G)|= |E(G')|,$ we can additionally  assume that the number of pairs of non-multiple edges which have a common vertex  in $G'$ is bigger than that in $G$.

So far we have constructed a bijection between the  edges of $G$ and the edges of $G'$. We want to prove that this bijection provides a  graph isomorphism. We will achieve this as a result of the  5 claims collected in the following proposition which is closely related to Lemma~ \ref{gr-edges}. 

\begin{proposition}\label{prop:5claims} The following facts hold.  

\begin{enumerate}
\item {\it If $\phi_{e_1}$ and $\phi_{e_2}$ have no common vertex in $G$, then they  have no common vertex  in~$G'$ as well.}

\item {\it If $\phi_{e_1}$ and $\phi_{e_2}$ are multiple edges in~$G$, then they are multiple edges in~$G'$ as well.}

\item {\it If $\phi_{e_1}$ and $\phi_{e_2}$ have exactly one common vertex in $G$, then they  have exactly one common vertex in~$G'$ as well.}

\item {\it If $\phi_{e_1}$, $\phi_{e_2}$ and $\phi_{e_3}$ form a star in~$G$, then they  form a star in~$G'$ as well.}
(Three edges form a star if they have one common vertex and their three other ends are distinct.)

\item {\it If $\phi_{e_1}$, $\phi_{e_2}$ and $\phi_{e_3}$ form a triangle in~$G$, then they  form a triangle in~$G'$ as well.} 
\end{enumerate}
\end{proposition}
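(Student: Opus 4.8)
The plan is to treat the \emph{degree} $d$ of Lemma~\ref{lem-degree} as the one piece of structure the filtered isomorphism must respect, and to read everything off the linear relation between the two generating systems. First I would record the consequence of the span identity $\mathrm{span}\{\tY_0,\dots,\tY_n\}=\mathrm{span}\{\tZ_0,\dots,\tZ_{n'}\}$: writing $\tZ_i=\sum_j c_{ij}\tY_j$, the coefficient of $\phi_e$ (for $e=(a,b)$) in $\tZ_i$ is $c_{ia}-c_{ib}$, so $\phi_e$ belongs to $\tZ_i$ iff $c_{ia}\neq c_{ib}$; since each $\phi_e$ lies in exactly two of the $\tZ_i$, the two $G'$-endpoints of every edge are explicitly determined, exactly as its two $G$-endpoints are. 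Thus the canonical edge-bijection $\phi_e\leftrightarrow\phi_e$ is in hand, and each of the five claims becomes a concrete comparison of two incidence patterns on the common edge set; the task is to force agreement.

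Claim (2) I would dispatch first and directly: two edges are parallel in $G$ exactly when $\phi_{e_1}$ and $\phi_{e_2}$ carry the same degree-$1$ coefficient vector $(c_{\cdot,e})$, which is a linear invariant; hence they have equal coefficients in every $\tZ_i$, so they belong to the same generators and are parallel in $G'$. As the span identity is symmetric (one may write $\tY_j=\sum_i c'_{ji}\tZ_i$ and repeat the argument), the implication reverses, so the partition of the edge set into parallel classes is \emph{identical} in $G$ and $G'$. This reduces the problem to the two reduced simple graphs $\bar G,\bar G'$ and their induced edge-bijection, in which two distinct edges meet in $0$ or $1$ vertices; claims (1) and (3) then merge into the single assertion that the bijection is an \emph{isomorphism of line graphs}, while claims (4) and (5) become the statement that it matches $K_{1,3}$ with $K_{1,3}$ and $K_3$ with $K_3$.

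For the line-graph step I would argue by a global count rather than locally, because there is no degree certificate internal to a single pair of edges that detects a shared vertex (a generic combination $d(\sum_i r_i\tZ_i)$ over the four relevant endpoints counts \emph{all} edges incident to them by Lemma~\ref{pr-basis}(3), not just the two in question). The route is to establish the one-sided implication of claim (1) --- if $e_1,e_2$ are non-adjacent in $G$ they remain non-adjacent in $G'$, equivalently every $G'$-adjacency descends from a $G$-adjacency --- which yields $A(\bar G')\le A(\bar G)$ for the numbers $A$ of adjacent non-parallel pairs; combined with the \emph{WLOG} hypothesis $A(\bar G')\ge A(\bar G)$ this forces $A(\bar G')=A(\bar G)$, hence exact preservation of adjacency and thus both claims (1) and (3). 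With the line graphs identified, the only ambiguity in reconstructing a graph from its line graph is the confusion of a triangle with a star on three edges; I would rule it out using claims (4),(5), detecting a star as the existence of a single generator $\tZ_v$ containing all three edge-elements (equivalently, three pairwise intersections collapsing to one vertex), and closing the star-versus-triangle bookkeeping by the same counting philosophy once adjacency is pinned. The conclusion $G\cong G'$ is then immediate from Lemma~\ref{gr-edges}.

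The hard part, and the main obstacle, is precisely this directional implication in claim (1). The linear and column-sum constraints (such as $\sum_i c_{ij}$ being independent of $j$, and the opposite signs of an edge at its two endpoints) are individually satisfiable even in the presence of a spurious $G'$-adjacency, so ruling one out must exploit the full system of ``exactly two generators per edge'' simultaneously with the degree formula of Lemma~\ref{pr-basis}(3); it is here that the extremal \emph{WLOG} choice (maximizing the number of adjacent non-parallel pairs) does the real work, rather than any local computation.
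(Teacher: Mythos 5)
Your reduction scheme (parallel classes first, then adjacency via two opposite inequalities, then Lemma~\ref{gr-edges}) matches the paper's, and your proof of claim (2) is essentially the paper's rank-one-versus-rank-two argument. But the proposal has a genuine gap exactly where you flag it: claim (1) is never proved, and your guess about how it should be proved points in the wrong direction. The extremal WLOG choice (maximizing the number of adjacent non-parallel pairs in $G'$) cannot yield claim (1): it produces the inequality $A(\bar G')\ge A(\bar G)$, i.e.\ it pushes toward \emph{more} adjacencies in $G'$, which is the opposite of what claim (1) asserts; that assumption is only consumed afterwards, in claim (3), once claim (1) has supplied the reverse inequality. Moreover, contrary to your assertion that no local computation can detect a spurious shared vertex, the paper's proof of claim (1) \emph{is} a local computation --- but it uses a piece of structure your proposal never touches: the multiplicative relation $\sum_{i=0}^{n'}\ln(1+\tZ_i)=0$ of Lemma~\ref{pr-basis}(2), read at quadratic order. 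Concretely: if $e_1,e_2$ share no vertex in $G$, then no $\tY_i$ contains the monomial $\phi_{e_1}\phi_{e_2}$, hence (equal spans) no $\tZ_i$ does either; if some $\tZ_j$ contained $\phi_{e_1},\phi_{e_2}$ with coefficients $a,b$, then $\ln(1+\tZ_j)$ would contain $\phi_{e_1}\phi_{e_2}$ with coefficient $-ab$, and for the sum of logarithms to vanish some other $\tZ_k$ must contain both edge-elements as well --- by the ``exactly two generators, opposite coefficients'' rule, with coefficients $-a,-b$, contributing another $-ab$ rather than cancelling. The total coefficient $-2ab\neq 0$ contradicts Lemma~\ref{pr-basis}(2). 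The degree $d(\cdot)$ alone indeed cannot see this; the logarithm identity is the missing local certificate.

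A second, smaller gap: for claims (4) and (5) your ``counting philosophy'' cannot close the books, because once adjacency is pinned the number of pairwise-adjacent non-parallel triples is automatically the same in $G$ and $G'$, and a star turning into a triangle could be compensated by a triangle turning into a star; no WLOG assumption is left to break this symmetry. The paper avoids counting altogether: project $\mathrm{span}\{\tY_0,\dots,\tY_n\}=\mathrm{span}\{\tZ_0,\dots,\tZ_{n'}\}$ to $\K^3$ by taking the coefficients of $\phi_{e_1},\phi_{e_2},\phi_{e_3}$; a star yields a $3$-dimensional image, while a triangle yields a $2$-dimensional one (its three coefficient rows sum to zero), so the two configurations can never correspond under an equality of spans. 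This directional, rank-based argument is what your sketch of (4)--(5) is missing.
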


\begin{proof}
To prove (1), assume the contrary, i.e.,  assume that   $\phi_{e_1}$ and $\phi_{e_2}$ belong to $\tZ_j$ (and denote  the corresponding  coefficients by $a$ and $b$ resp.). 
Since elements $\tY_0,\ldots,\tY_n$ have no monomial $\phi_{e_1}\phi_{e_2}$, then $\tZ_0,\ldots,\tZ_{n'}$  have no monomial $\phi_{e_1}\phi_{e_2}$ as well (since their spans  coincide). Then $\ln(1+\tZ_j)$ contains the monomial $\phi_{e_1}\phi_{e_2}$ with the coefficient~$-ab$. 

By Lemma~\ref{pr-basis} (2), we have $\sum_{i=0}^{n'} \ln(1+\tZ_i)$, then there exists $k\in [0,n'], k\neq i$ such that $\ln(1+\tZ_k)$ 
contains the  monomial $\phi_{e_1} \phi_{e_2}$ with a non-zero coefficient.  
Then $\tZ_k$ must contain $\phi_{e_1}$ and $\phi_{e_2}$ (since $\tZ_k$ does not contains  $\phi_{e_1}\phi_{e_2}$). Hence, $\tZ_k$ has $\phi_{e_1}$ and $\phi_{e_2}$ with coefficients $-a$ and $-b$ resp.  
Therefore  $\ln(1+\tZ_k)$ contains monomial $\phi_{e_1}\phi_{e_2}$ with the coefficient $-(-a)(-b)=-ab$. Thus the sum $\sum_{j=0}^{n'} \ln(1+\tZ_j)$ contains $\phi_{e_1}\phi_{e_2}$ with coefficient $-2ab$, contradiction.

\smallskip
To prove (2), consider the map from $\text{span}\{\tY_0,\ldots,\tY_n\}$ to $\K^2$, sending  an element from the span  to the  pair of coefficients of  $\phi_{e_1}$ and $\phi_{e_2}$ resp. 
Since edges $e_1$ and $e_2$ are multiple in $G$, the image of this map has dimension $1$. 
If $\phi_{e_1}$ and $\phi_{e_2}$ are not multiple in~$G'$, then the image of the map  from $\text{span}\{\tZ_0,\ldots,\tZ_{n'}\}=\text{span}\{\tY_0,\ldots,\tY_n\}$ has dimension $2$.

\smallskip
 To prove (3), observe that we have already settled Claims  1 and  2, and also we additionally assumed that the number of pairs of edges which have a common vertex in  $G'$  is bigger than that in $G$. Then each such pair of edges from $G$ is mapped  to the pair of edges from $G'$ with the same property.

\smallskip
To prove (4), consider the map from $\text{span}\{\tY_0,\ldots,\tY_n\}$ to $\K^3$, sending an element in the span   to the  triple of coefficients of  $\phi_{e_1},$ $\phi_{e_2}$ and $\phi_{e_3}$ resp. 
The image of this map has dimension $3$. 
However if $\phi_{e_1}$, $\phi_{e_2}$ and $\phi_{e_3}$ form a triangle in~$G'$, then the image of the map  from $\text{span}\{\tZ_0,\ldots,\tZ_{n'}\}$ has dimension $2$.

\smallskip
Proof of (5) is Â similar to  that of  (4).\end{proof}

Now applying Lemma~\ref{gr-edges} we finish our proof of Theorem~\ref{th:Isom}.  
\end{proof}

\section {Further generalizations}

In this section we will consider the Hilbert series of other filtered algebras similar to $\KK_G$.  (Recall that  the Hilbert series  of a filtered algebra is, by definition, the   Hilbert series  of its associated graded algebra.)

Let $f$ be a univariate polynomial or a formal power series over $\K$. We define the subalgebra $\cF[f]_G\subset \Phi_G$   as  generated by $1$ together with 
$$f(X_i)=f\left(\sum c_{i,e}\phi_e\right),\; i=0,\dots,n.$$ 
\begin{example}
For $f(x)=x$, $\cF[f]_G$ coincides with $\CC_G$. For $f(x)=\exp(x)$, $\cF[f]_G$  coincides with $\KK_G$. 
\end{example}

Obviously, the filtered algebra $\cF[f]_G$ does not depend on the constant term of~$f.$ From now on,  we  assume that $f(x)$ has no constant term, since for any $g$ such that $f-g$ is constant, the filtered algebras $\cF[f]_G$ and $\cF[g]_G$ are the same.

\begin{proposition}
Let $f$ be any polynomial with a non-vanishing linear term. Then algebras $\CC_G$ and $\cF[f]_G$  coincide as subalgebras of $\Phi_G$.
\end{proposition}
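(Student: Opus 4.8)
The plan is to prove the two inclusions $\cF[f]_G\subseteq \CC_G$ and $\CC_G\subseteq \cF[f]_G$ separately, mimicking the structure of the proof of Lemma~\ref{lem:isCK}. For the first inclusion, observe that each generator $f(X_i)$ is a polynomial in $X_i$ with no constant term (by our standing assumption), so $f(X_i)$ is automatically a $\K$-linear combination of powers $X_i, X_i^2, \dots$. Since every power $X_i^j$ already lies in $\CC_G$, we immediately get $f(X_i)\in\CC_G$ for every $i$, hence $\cF[f]_G\subseteq\CC_G$.

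The reverse inclusion is where the content lies: I must recover each $X_i$ as an element of the subalgebra generated by $\{f(X_j)\}$. The key structural fact, already exploited in Lemma~\ref{lem:isCK}, is that $X_i$ is nilpotent in $\Phi_G$, with $(X_i)^{d_i+1}=0$, where $d_i$ is the degree of vertex $i$. Writing $f(x)=a_1 x + a_2 x^2 + \cdots$ with $a_1\neq 0$ (the non-vanishing linear term hypothesis), I would consider $f$ as a formal power series with invertible leading part and construct a formal compositional inverse $g(y)=b_1 y + b_2 y^2 + \cdots$ with $b_1 = a_1^{-1}$, so that $g(f(x))=x$ as formal power series. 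Because $X_i$ is nilpotent, substituting $X_i$ into any formal power series only involves finitely many terms, so the identity $g(f(X_i))=X_i$ holds in $\Phi_G$ with $g(f(X_i))$ being a genuine finite polynomial in $f(X_i)$. This exhibits $X_i$ as a polynomial in the single generator $f(X_i)$, hence $X_i\in\cF[f]_G$, giving $\CC_G\subseteq\cF[f]_G$.

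The main obstacle—though a mild one—is justifying that the formal compositional inverse can be applied term-by-term despite $f$ being only a polynomial: the inverse $g$ is generically a genuine power series, not a polynomial, so I must invoke nilpotency to truncate the substitution to a finite sum before claiming $g(f(X_i))\in\cF[f]_G$. The condition $a_1\neq 0$ is exactly what guarantees the compositional inverse exists (the linear coefficient of $f$ must be a unit in $\K$, which holds since $\K$ is a field and $a_1\neq 0$). Concretely, since $(X_i)^{d_i+1}=0$, only the terms of $g$ up to degree $d_i$ contribute, so I may replace $g$ by its truncation $g_{\le d_i}$, a polynomial, and then $g_{\le d_i}(f(X_i))=X_i$ holds on the nose in $\Phi_G$.

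Combining the two inclusions yields $\CC_G=\cF[f]_G$ as subalgebras of $\Phi_G$. I would remark that this argument is purely local to each generator—it uses nothing about the combinatorics of $G$ beyond the nilpotency degree $d_i+1$ of each $X_i$—and that it recovers Lemma~\ref{lem:isCK} as the special case $f(x)=\exp(x)-1$, where the inverse is $g(y)=\ln(1+y)$, matching the explicit logarithm computation used there.
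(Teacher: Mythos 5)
Your proof is correct and follows essentially the same route as the paper: the paper's own proof simply says to repeat the argument of Lemma~\ref{lem:isCK} with $\exp(x)-1$ replaced by $f(x)$ and $\ln(1+y)$ replaced by $f^{-1}(y)$, which is exactly the compositional-inverse-plus-nilpotency argument you give. Your write-up merely makes explicit the details the paper leaves implicit (existence of the formal inverse from $a_1\neq 0$, and truncation of $g$ at degree $d_i$ using $(X_i)^{d_i+1}=0$), which is a faithful elaboration rather than a different method.
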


\begin{proof}
The argument  is the same as in the proof of Lemma~\ref{lem:isCK}. We  only need to change $\exp(x)-1$ to $f(x)$ and $\ln(1+y)$ to $f^{-1}(y)$.
\end{proof}


\begin{theorem}\label{th:Isom-f} Let $f$ be any polynomial with non-vanishing  linear and quadratic terms. Then given two simple graphs $G_1$ and $G_2,$ 
$\cF[f]_{G_1}$ and $\cF[f]_{G_2}$ are isomorphic as filtered algebras  if and only if  $G_1$ and $G_2$ are isomorphic graphs. 
\end{theorem}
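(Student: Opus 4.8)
The plan is to adapt the entire machinery developed for Theorem~\ref{th:Isom} to the more general generators $f(X_i)$, exploiting that $f$ having a non-vanishing linear term already forces $\cF[f]_G=\CC_G$ as subalgebras of $\Phi_G$ (the preceding Proposition), so the underlying vector spaces and the collection of edge-elements $\phi_e$ are unchanged; only the \emph{filtration} coming from the new generators needs to be understood. Concretely, writing $f(x)=a_1x+a_2x^2+\cdots$ with $a_1,a_2\neq 0$, I set $W_i:=f(X_i)$ and would first re-establish the two structural facts that drove the earlier argument: that Lemma~\ref{lem-degree} goes through verbatim (for any nilpotent $R\in\cF[f]_G$, the degree $d(R)$ equals the number of edges of $G$ belonging to $R$, since the argument only uses that the degree-$1$ part of $R$ in the graded algebra $\CC_G$ detects exactly the participating edges), and that the analog of Lemma~\ref{pr-basis} holds for the $W_i$. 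Items (1) and (3)--(4) of that lemma transfer immediately because they are statements about which $\phi_e$ appear with which coefficients, and $f(X_i)=a_1X_i+(\text{higher order})$ has the same set of participating edges as $X_i$, with the linear coefficients merely rescaled by $a_1$.

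The real point of divergence from the $\KK_G$ case is item (2) of Lemma~\ref{pr-basis}, namely the relation $\sum_i\ln(1+\tY_i)=0$, which was the engine of Claim~(1) in Proposition~\ref{prop:5claims}. In the general setting the inverse series $f^{-1}$ replaces $\ln(1+y)$, so I would prove the substitute identity $\sum_{i=0}^n f^{-1}(W_i)=\sum_{i=0}^n X_i=0$, which holds for exactly the same reason. The crucial feature I would extract is the behavior of the \emph{quadratic} part: for a single generator, $f^{-1}(W_i)$ expands as $\frac{1}{a_1}W_i-\frac{a_2}{a_1^3}W_i^2+\cdots$, and since $a_2\neq0$ the coefficient $-a_2/a_1^3$ of the $W_i^2$ term is non-zero. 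This is precisely where the hypothesis on the quadratic term of $f$ enters, and it is the analog of the fact that in $\ln(1+\tY_i)$ a product $\phi_{e_1}\phi_{e_2}$ of two edges meeting at vertex $i$ appears with non-zero coefficient. With this in hand, the proof of Claim~(1) of the proposition carries over: if $\phi_{e_1},\phi_{e_2}$ share vertex $j$ in $G'$ (with coefficients $a,b$ in $\tZ_j$, in the new generators), then $f^{-1}(1+\tZ_j)$ contains $\phi_{e_1}\phi_{e_2}$ with coefficient proportional to $-\frac{a_2}{a_1^3}ab\neq 0$, and the same cancellation-counting argument through $\sum_i f^{-1}(1+\tZ_i)=0$ yields the contradiction.

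Once Claim~(1) is secured, the remaining Claims~(2)--(5) of Proposition~\ref{prop:5claims} need no information about $f$ beyond $\cF[f]_G=\CC_G$: they are purely linear-algebraic statements about the dimensions of images of the coordinate-projection maps on $\mathrm{span}\{\tY_i\}=\mathrm{span}\{\tZ_i\}$, and those spans and coefficient patterns are identical to the $\KK_G$ case. So I would run the identical argument: establish via the degree function and the at-most-two-generators-per-edge counting that $|E(G)|=|E(G')|$ and that every $\phi_e$ lies in exactly two of the new generators with opposite (rescaled) coefficients, prove the five claims, and then invoke the folklore Lemma~\ref{gr-edges} to reconstruct $G'$ from $G$ up to isomorphism.

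I expect the main obstacle to be making the quadratic-term computation clean and fully rigorous, i.e. verifying that the coefficient of $\phi_{e_1}\phi_{e_2}$ in $f^{-1}(W_j)$ is a non-zero multiple of the product of the two linear coefficients \emph{whenever} $a_2\neq0$, including tracking how $f^{-1}$'s quadratic coefficient is built from $a_1,a_2$ and confirming that no higher-order contribution can accidentally cancel the quadratic one at the level of the square-free monomial $\phi_{e_1}\phi_{e_2}$ (it cannot, since higher powers $W_j^k$ with $k\geq3$ contribute only to monomials of degree $\geq3$ in the $\phi_e$). A secondary point to check is that the filtered isomorphism still matches up the degree-$1$ graded pieces, i.e. that $\mathrm{span}\{1,\tY_i\}=\mathrm{span}\{1,\tZ_i\}$, which follows as before from the isomorphism respecting the filtration together with the fact that each generator sits in filtration level $1$. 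With these in place the conclusion follows exactly as in Theorem~\ref{th:Isom}.
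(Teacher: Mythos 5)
Your proposal is correct and takes essentially the same route as the paper, whose entire proof of Theorem~\ref{th:Isom-f} is the single sentence ``Repeat the proof of Theorem~\ref{th:Isom}.'' In fact you supply the details the paper omits, correctly locating the only place where the hypothesis on the quadratic term enters: the compositional inverse $f^{-1}(y)=y/a_1-(a_2/a_1^3)y^2+\cdots$ replaces $\ln(1+y)$ in the analog of Lemma~\ref{pr-basis}~(2), so that a product $\phi_{e_1}\phi_{e_2}$ of edges sharing a vertex appears in $f^{-1}(\tZ_j)$ with coefficient $-2ab\,a_2/a_1^3\neq 0$, which drives Claim~(1) of Proposition~\ref{prop:5claims} exactly as in the exponential case.
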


\begin{proof} Repeat the proof of Theorem~\ref{th:Isom}. 
\end{proof}


\subsection{Generic functions $f$ and their Hilbert series}
Since $X_i^{d_i+1}=0$ for any $i$, we can always truncate any polynomial (or a formal power series) $f$ at  degree $|G|+1$ without changing $\cF[f]_G$.
Therefore, for a given graph $G$, it suffices to consider   $f$ as a polynomial of degrees less than or equal to  $|G|$. 
 To simplify our notation,  let us  write  $HS_{f,G}$ instead of $HS_{\cF[f]_G}$. 

\medskip
Given a graph $G$, consider the space of polynomials of degree  less than or equal to $|G|$ and the corresponding Hilbert series. 

\begin{proposition}
\label{pr-hil} In the above notation, for generic polynomials $f$ of degree at most $|G|$, the Hilbert series $HS_{f,G}$ is the same. This generic Hilbert series (denoted by $HS_{G}$ below) is maximal  in the majorization partial order  among all  $HS_{g,G},$ where $g$ runs over the set of all formal power series with non-vanishing linear term. 
\end{proposition}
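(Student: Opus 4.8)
The plan is to analyze the generic behavior of the Hilbert series $HS_{f,G}$ by viewing it as a function of the coefficients of $f$ and showing two things: (i) the Hilbert series attains a single maximal value on a Zariski-open subset of the coefficient space, and (ii) this maximal value majorizes every other $HS_{g,G}$ arising from a power series $g$ with non-vanishing linear term. The key observation is that by Proposition above, all these algebras coincide \emph{as subalgebras} of $\Phi_G$; they differ only in their filtered (equivalently, associated graded) structure. Thus I am comparing the dimensions of the graded pieces of the associated graded algebra $\mathrm{gr}\,\cF[f]_G$ for different choices of~$f$.

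First I would set up the relevant linear-algebraic framework. Fix the ambient square-free algebra $\Phi_G$ with its natural basis of square-free monomials, and recall that the filtration on $\cF[f]_G$ is by the degree of the generators $f(X_i)$. The dimension of the $k$-th piece of the associated graded algebra equals the dimension of the span of all length-$\le k$ products of the generators $f(X_i)$, modulo the span of the length-$\le k-1$ products. Each such dimension is the rank of a matrix whose entries are polynomials in the coefficients of $f$ (since each $f(X_i)$ expands linearly in these coefficients and products expand polynomially). The rank of a matrix with polynomial entries is maximized on a Zariski-open dense set, and for each graded degree $k$ there is a corresponding open set $U_k$; intersecting the finitely many $U_k$ (finitely many because the filtration stabilizes at degree $|G|$) yields a single nonempty Zariski-open set on which \emph{all} graded dimensions are simultaneously maximal. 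This establishes that the generic Hilbert series $HS_G$ is well-defined and that $HS_{f,G}$ is coordinatewise dominated by $HS_G$ for every polynomial $f$, since each graded dimension is at most its generic maximum.

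The harder half is the claim that $HS_G$ majorizes $HS_{g,G}$ even when $g$ is an arbitrary formal power series with non-vanishing linear term, not merely a polynomial of degree $\le|G|$. Here I would use the truncation remark preceding the statement: since $X_i^{d_i+1}=0$ and hence $X_i^{|G|+1}=0$, replacing $g$ by its truncation $g_{\le |G|}$ at degree $|G|$ does not change the subalgebra $\cF[g]_G$, nor does it change the filtered structure, because the filtration depends only on the generators as filtered elements and the higher-degree terms of $g$ contribute nothing in $\Phi_G$. Therefore every such $g$ reduces to a polynomial of degree at most $|G|$ with the same nonzero linear coefficient, and the polynomial case already handled applies. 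One point deserving care is the precise meaning of ``majorization'' asserted in the statement: since the generic Hilbert series dominates $HS_{f,G}$ coefficientwise (each graded dimension is maximal) while the \emph{total} dimension is the fixed invariant from Theorem~\ref{th:4alg} (the number of spanning subforests, independent of $f$ for $f$ with non-vanishing linear term), the two series have equal sums, so coefficientwise domination of the partial sums is exactly the majorization order. I expect the main obstacle to be justifying that the genericity sets $U_k$ for the different graded degrees have nonempty common intersection and that the resulting generic series genuinely sits at the top of the majorization order rather than merely being maximal componentwise among polynomials; the reduction from power series to polynomials via truncation is the clean step that closes the gap.
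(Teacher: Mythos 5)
Your overall machinery---lower semicontinuity of matrix rank, intersecting finitely many Zariski-open sets, and truncation at degree $|G|$ to reduce power series to polynomials---is the same as the paper's, but you apply it to the wrong quantities, and this creates a genuine error. You claim that each graded dimension is the rank of a single matrix with entries polynomial in the coefficients of $f$, hence generically maximal, and you conclude that $HS_{f,G}$ is \emph{coordinatewise} dominated by the generic series $HS_G$. Both claims fail. A graded dimension is a \emph{difference} of two ranks, $\dim F_k-\dim F_{k-1}$, where $F_k$ is the span of products of at most $k$ generators, and differences of ranks are not semicontinuous; moreover, since the total dimension $\sum_k\left(\dim F_k-\dim F_{k-1}\right)$ is independent of $f$ (Theorem~\ref{th:4alg}), the graded dimensions cannot all be simultaneously maximal unless every $HS_{f,G}$ is identical. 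The paper's own Example~\ref{examp} refutes coordinatewise domination explicitly: $HS_{\KK_{G_1}}(t)=1+4t+10t^2+14t^3+3t^4$, whereas the generic series is $HS_{G_1}(t)=1+4t+10t^2+15t^3+2t^4$, so the coefficient of $t^4$ \emph{drops} from $3$ to $2$ for generic $f$. Your closing reconciliation (``equal sums plus coefficientwise domination gives majorization'') would in fact force any two series with equal sums to be identical, collapsing the statement entirely.

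The fix---and it is what the paper does---is to run the semicontinuity argument on the \emph{filtered} dimensions, i.e., directly on the partial sums of the Hilbert series: the sum of the first $k+1$ entries of $HS_{f,G}$ equals $\dim\,\mathrm{span}\left\{f^{\alpha_0}(X_0)\cdots f^{\alpha_n}(X_n):\ \sum_{i=0}^n\alpha_i\le k\right\}$, which \emph{is} a single matrix rank with entries polynomial in the coefficients of $f$, hence generically maximal. Intersecting the finitely many dense open sets (only $k\le |G|$ matter, since the series has degree at most $|G|+1$) yields generic $f$ for which all partial sums are simultaneously maximal; this at once shows that the generic Hilbert series is well-defined and that it dominates every $HS_{g,G}$ in the paper's majorization order, which is \emph{defined} as domination of partial sums (with no equal-sum requirement). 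Your truncation step reducing a power series $g$ with nonzero linear term to a polynomial of degree at most $|G|$ is correct and matches the paper; everything else needs to be rewritten in terms of the filtered pieces rather than the graded ones.
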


Recall that, by definition,  a sequence $(a_0,a_1,\ldots)$ is {\it bigger} than $(b_0,b_1,\ldots)$ in the majorization partial order if and only if, for any $k\geq 0$, 
  $$\sum_{i=0}^ka_i\geq \sum_{i=0}^kb_i.$$ More information about the majorization partial order can be found in e.g. \cite{MaOlAr}.
\begin{proof}
Note that,  for a function $f,$ the sum of the first $k+1$ entries of its Hilbert series $HS_{f,G}$
equals the  dimension of $span\{f^{\alpha_0}(X_0)f^{\alpha_1}(X_1)\cdots f^{\alpha_n}(X_n):\ \sum_{i=0}^n \alpha_i\leq k\}.$ It is obvious that, for a generic $f,$ this dimension is maximal. Since all Hilbert series $HS_{f,G}$ are polynomials of degree at most  $|G|+1,$ then the required property has to be checked only for $k\leq [G|$. Therefore it is obvious that, for generic $f$, their Hilbert series is  maximal in the majorization order.  
\end{proof}

\begin{remark}ÃÂ 
{\rm We know that the Hilbert series of the graded algebra $\CC_G$ is a specialization of the Tutte polynomial of $G$. 
However we can not  calculate the Hilbert series of $\KK_G$ from the Tutte polynomial of $G$, because there exists a pair of graphs 
$(G,G')$ with the same Tutte polynomial and different $HS_{\KK_{G}}$ and $HS_{\KK_{G'}}$, see Example~\ref{examp}.

Additionally, notice that, in general,  $HS_{\exp,G}:=HS_{\KK_G} \neq HS_{G}$. 
Analogously we can not calculate generic 
 Hilbert series $HS_G$ from the Tutte polynomial of $G$, 
 see Example~\ref{examp}.}
 \end{remark}

\begin{example} \label{examp}
{\begin{figure}[htb!]

\centering
\includegraphics[scale=0.5]{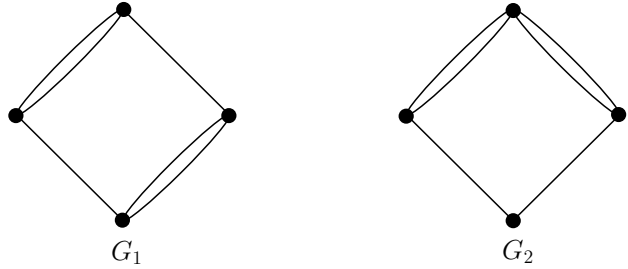}

\captionsetup{justification=centering}
\caption{Graphs with the same matroid and different ``K-theoretic" and generic Hilbert series.}
\label{g1g2}
\end{figure}

}
{\rm  Consider two graphs $G_1$ and $G_2$ presented in Fig.~\ref{g1g2}. It is well-known that $G_1$ and $G_2$ have isomorphic matroids and  hence, the same Tutte polynomial. Therefore, the Hilbert series of $\CC_{G_1}$ and $\CC_{G_2}$ coincide. Namely, 
$$HS_{\CC_{G_1}}(t)=HS_{\CC_{G_2}}(t)=1+3t+6t^2+9t^3+8t^4+4t^5+t^6.$$ 
However, the Hilbert series of ``K-theoretic" algebras are distinct. Namely 
$$HS_{\KK_{G_1}}(t)=1+4t+10t^2+14t^3+3t^4,$$ 
$$HS_{\KK_{G_2}}(t)=1+4t+10t^2+15t^3+2t^4.$$ 
Moreover their generic Hilbert series are also distinct and different from  their ``K-theoretic" Hilbert series. Namely, 
$$HS_{{G_1}}(t)=1+4t+10t^2+15t^3+2t^4,$$ 
$$HS_{{G_2}}(t)=1+4t+10t^2+16t^3+t^4.$$
Putting our information together we get, $$HS_{\CC_{G_1}}=HS_{\CC_{G_2}}\prec HS_{\KK_{G_1}}\prec  HS_{\KK_{G_2}}=HS_{{G_1}}\prec  HS_{{G_2}},$$ where $\prec$ denotes the majorization partial order.} 
\end{example}\

\section{``K-theoretical" analog for  spanning trees}

 For an arbitrary loopless graph $G$  on the vertex set $\{0, . . . , n\},$  
let  $\Phi_G^T$ be the graded commutative algebra over a given field $\K$ generated by the variables $\phi_e, e \in G$, with the defining relations:
$$(\phi_e)^2 = 0, \quad \text {for any edge}\; e\in G;$$
$$\prod_{e\in H}\phi_e=0, \quad \text {for any non-slim subgraph\;\;} H\subset G,$$
where a subgraph $H$ is called {\it slim} if its complement $G\setminus H$ is connected.

Let $\CC_G^T$ be the subalgebra of  $\Phi_G^T$ generated by the elements
$$X_i^T =\sum_{e\in G} c_{i,e} \phi_e,$$
for  $i = 1, . . . , n, $ where $c_{i,e}$ is given by \eqref{eq:def}. (Notice that $X_i^T$ and $X_i$ are defined by exactly the same formula but in different ambient algebras.) 

Algebra  $\CC_G^T$ will be called the {\it spanning trees counting algebra}ÃÂ of $G$ and is,  obviously, the  quotient of $\CC_G$ modulo the set of relations $\prod_{e\in H}\phi_e=0$ over all non-slim subgraphs $H$. Its defining set of relations is very natural and resembles that of \eqref{eq:relF}. Namely, define  the ideal  $\J_G^T$ in $\K[x_1,\dots,x_n]$ as generated by the polynomials: 
\begin{equation}\label{eq:relT}
p_I^T=\left( \sum_{i\in I}x_i\right)^{D_I},
\end{equation} 
where $I$ ranges over all nonempty subsets in $\{1,\dots,n\}$ and the number $D_I$ is the same as in 
\eqref{eq:relF}. Set $\B^T_G:=\K[x_1,\dots,x_n]/\J_G^T.$ One of the results of \cite {PS} claims the following.

\begin{theorem}\label{th:trees} {\rm [Theorems~9.1 and Corollary~10.5 of \cite{PS}]} For any simple graph $G$ on the set of vertices $\{0,1,\dots,n\},$  algebras  $\B_G^T$ and $\CC_G^T$ are isomorphic. 
Their total dimension is equal to the number of spanning trees in  $G$.
The dimension $\dim B_G^T(k)$ of the $k$-th graded component of  $\B_G^T$ equals
the number of spanning trees $T$ in $G$ with external activity $|G|-n-k$. \end{theorem}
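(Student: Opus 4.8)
The plan is to run the same two-sided argument that establishes the forest case in Theorem~\ref{th:forests}, adapted to the relations $p_I^T=(\sum_{i\in I}x_i)^{D_I}$ and to the ambient algebra $\Phi_G^T$: produce a graded surjection $\B_G^T\twoheadrightarrow\CC_G^T$, sandwich both dimensions between the number of spanning trees, and then refine the equality of Hilbert series through external activity.

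First I would check that $x_i\mapsto X_i^T$ extends to a graded algebra homomorphism $\pi\colon\B_G^T\to\CC_G^T$, which amounts to verifying that each relation $p_I^T$ is killed. Setting $S_I=\sum_{i\in I}X_i^T$, the coefficient of $\phi_e$ in $S_I$ is $\sum_{i\in I}c_{i,e}$, which equals $\pm1$ precisely when $e$ lies in the cut $\delta(I)$ (exactly one endpoint in $I$) and $0$ otherwise; thus $S_I=\sum_{e\in\delta(I)}\pm\phi_e$ with $|\delta(I)|=D_I$. Since the $\phi_e$ square to zero, $(S_I)^{D_I}=\pm D_I!\prod_{e\in\delta(I)}\phi_e$, and the cut $\delta(I)$ is non-slim because its complement contains no edge between $I$ and $\bar I$ and hence is disconnected (both $I$ and $\bar I\ni 0$ are nonempty). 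Therefore $\prod_{e\in\delta(I)}\phi_e=0$ in $\Phi_G^T$, so $\pi(p_I^T)=0$ and $\pi$ is a well-defined graded surjection, giving $\dim\CC_G^T\le\dim\B_G^T$.

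Next I would bound $\dim\B_G^T$ from above by the number of spanning trees. The monomials $x^b\notin M_G$, where $M_G=\langle\prod_{i\in I}x_i^{d_I(i)}:\emptyset\ne I\subseteq\{1,\dots,n\}\rangle$ is the $G$-parking-function ideal, are known (matrix--tree theorem) to be equinumerous with spanning trees, so it suffices to show $\mathrm{in}(\J_G^T)\supseteq M_G$ for a suitable term order. This is the genuinely hard step: the naive initial terms of the power generators $(\sum_{i\in I}x_i)^{D_I}$ are single powers $x_{\min I}^{D_I}$ and do not generate $M_G$, so one must exhibit, for each $I$, an element of $\J_G^T$ whose leading monomial is $\prod_{i\in I}x_i^{d_I(i)}$. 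I would obtain these via the power-ideal/monotone-monomial-ideal degeneration of Postnikov--Shapiro, which deforms $\J_G^T$ to $M_G$ while controlling syzygies; this yields $\dim\B_G^T\le\#\{\text{spanning trees}\}$. For the matching lower bound $\dim\CC_G^T\ge\#\{\text{spanning trees}\}$ I would, as in the forest proof, exhibit a spanning-tree-indexed family of products of the generators $X_i^T$ and verify their linear independence in $\Phi_G^T$ by inspecting top square-free monomials. The three inequalities then collapse to equalities, so $\pi$ is a graded isomorphism and the common total dimension is the number of spanning trees.

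Finally, for the graded statement I would compute the Hilbert series of $\B_G^T$ by refining the parking-function count by total degree $\sum_i b_i$ and matching it with external activity. The clean way is a statistic-preserving bijection between $G$-parking functions and spanning trees --- Dhar's burning algorithm or the Cori--Le Borgne bijection --- under which $\sum_i b_i=|G|-n-e(T)$, so that $\dim\B_G^T(k)$ counts trees of external activity $|G|-n-k$; equivalently one checks that $\sum_k\dim\B_G^T(k)\,t^k$ satisfies the deletion--contraction recursion of the corresponding Tutte specialization. I expect the main obstacle to be this combinatorial identification of the degree grading with Tutte external activity together with the initial-ideal computation $\mathrm{in}(\J_G^T)=M_G$; unlike the bare cardinality count, both require tracking a statistic through the parking-function/standard-monomial correspondence rather than merely counting.
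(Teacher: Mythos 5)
You should first note that the paper contains no proof of this statement: the bracketed attribution marks it as quoted verbatim from \cite{PS} (Theorems~9.1 and Corollary~10.5 there), and it is used later only as a black box, in the proof of Theorem~\ref{th:4alg-tree}. So the only meaningful comparison is with the original Postnikov--Shapiro argument, of which your sketch is essentially a compressed outline. The one step you carry out in full is correct: $S_I=\sum_{i\in I}X_i^T$ is supported precisely on the cut $\delta(I)$, so $(S_I)^{D_I}=\pm D_I!\,\prod_{e\in\delta(I)}\phi_e$, and this vanishes in $\Phi_G^T$ because a cut with both sides nonempty (here $0\in\bar I$) is a non-slim subgraph; since $\K$ has characteristic zero, $D_I!\neq 0$, and the graded surjection $\B_G^T\twoheadrightarrow\CC_G^T$ is well defined. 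The other two steps --- the bound $\dim\B_G^T\le\#\{\text{spanning trees}\}$ and the identification of the grading with external activity --- are exactly the content of the results being cited, and you defer them to the Postnikov--Shapiro degeneration machinery and to the Dhar/Cori--Le~Borgne bijection. That is a fair diagnosis of where the difficulty sits, but it means your proposal establishes nothing independently of \cite{PS}; given that the paper itself imports the theorem wholesale, this is acceptable as a reconstruction rather than a new proof.

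One technical caveat about the route you chose for the upper bound. What the Postnikov--Shapiro deformation theory actually provides is term-order free: for a monotone monomial ideal such as your $G$-parking-function ideal $M_G$, the monomials outside $M_G$ span the quotient of $\K[x_1,\dots,x_n]$ by any deformation of $M_G$, and $\J_G^T$ is such a deformation. This spanning statement already yields $\dim\B_G^T\le\#\{\text{spanning trees}\}$, and, combined with the lower bound, it makes the standard monomials a homogeneous basis, so the Hilbert series of $\B_G^T$ equals that of $\K[x_1,\dots,x_n]/M_G$ and your statistic-preserving bijection then finishes the graded claim. The Gr\"obner-theoretic claim you state instead, namely $M_G\subseteq\mathrm{in}(\J_G^T)$ for a suitable term order (hence, by the dimension count, equality), is strictly stronger than what the cited machinery gives and is a delicate question in its own right; if you insist on that formulation you have a genuine gap, whereas the spanning formulation closes the argument.
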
 

Similarly to the above,  we can  define the filtered algebra $\KK_G^T\subset \Phi_G^T$   which is isomorphic to $\CC_G^T$ as a non-filtered algebra.   Namely, $\KK_G^T$ is defined by the generators:
$$Y_i^T=\exp(X_i^T)=\prod_{e\in G} (1+c_{i,e}\phi_e),\; i=0,\dots,n.$$

\medskip
The first result of this section is as follows.  Define  the ideal  $\I_G^T \subseteq \K[y_0,y_1,\dots,y_n]$ as generated by the polynomials: 
\begin{equation}\label{eq:relTT}
q_I^T=\left( \prod_{i\in I}y_i-1\right)^{D_I},
\end{equation} 
where $I$ ranges over all nonempty proper subsets in $\{0,1,\dots,n\}$ and the number $D_I$ is the same as in 
\eqref{eq:relF}, together with the generator
\begin{equation}\label{eq:relTT-full}
q_{\{0,1,\dots,n\}}^T= \prod_{i=0}^n y_i-1.
\end{equation} 
 Set $\cD_G^T:=\K[y_0,\dots,y_n]/\I_G^T.$

\medskip
 We present two  results similar to the case  of spanning  forests. 

\begin{theorem}\label{th:4alg-tree} For any simple graph $G,$  algebras   $B_G^T,$  $\CC_G^T$, $\cD_G^T$ and $\KK_G^T$  are  isomorphic as (non-filtered) algebras.
Their total dimension is equal to the number of spanning trees in $G$. 
\end{theorem}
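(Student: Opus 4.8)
The plan is to imitate, essentially verbatim, the proof of Theorem~\ref{th:4alg}, replacing each forest object by its tree counterpart. Recall that Theorem~\ref{th:4alg} was assembled from three ingredients: the identification $\CC_G=\KK_G$ as subalgebras of $\Phi_G$ (Lemma~\ref{lem:isCK}), the algebra isomorphism $\B_G\cong\cD_G$ (Lemma~\ref{lem:isBD}), and the Postnikov--Shapiro isomorphism $\B_G\cong\CC_G$ (Theorem~\ref{th:forests}). In the tree setting the last ingredient is already available as Theorem~\ref{th:trees}, which gives $\B_G^T\cong\CC_G^T$. So I would first establish the tree analogs of the two lemmas and then combine them.

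For the analog of Lemma~\ref{lem:isCK}, I claim $\CC_G^T$ and $\KK_G^T$ coincide as subalgebras of $\Phi_G^T$. The only input to the original argument was the nilpotency relation $(X_i)^{d_i+1}=0$ together with the formal reciprocity of $\exp$ and $\ln$. Since $\Phi_G^T$ is a quotient of $\Phi_G$, we still have $(X_i^T)^{d_i+1}=0$ (only $d_i$ edge-variables are incident to vertex $i$). Hence $Y_i^T=\exp(X_i^T)=1+\sum_{j=1}^{d_i}(X_i^T)^j/j!\in\CC_G^T$, giving $\KK_G^T\subset\CC_G^T$; conversely $\widetilde{Y}_i^T=Y_i^T-1$ satisfies $(\widetilde{Y}_i^T)^{d_i+1}=0$, so $X_i^T=\ln(1+\widetilde{Y}_i^T)\in\KK_G^T$, giving $\CC_G^T\subset\KK_G^T$. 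The proof of Lemma~\ref{lem:isCK} thus transcribes word for word.

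For the analog of Lemma~\ref{lem:isBD}, I would again pass to formal power series, set $\widetilde{y}_i=y_i-1$, and use the isomorphism $\psi\colon\widetilde{y}_i\mapsto e^{x_i}-1$ of $\K[[\widetilde{y}_0,\dots,\widetilde{y}_n]]$ with $\K[[x_0,\dots,x_n]]$ (inverse $x_i\mapsto\ln(1+\widetilde{y}_i)$). For a nonempty proper subset $I$ the computation is identical to the forest case: $\psi$ sends $(\prod_{i\in I}y_i-1)^{D_I}$ to $(\exp(\sum_{i\in I}x_i)-1)^{D_I}=(\sum_{i\in I}x_i)^{D_I}\cdot(\text{unit})$, i.e. to $p_I^T$ times an invertible series. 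The one genuinely new point is the special full-set generator $q_{\{0,\dots,n\}}^T=\prod_{i=0}^n y_i-1$, introduced by hand because the uniform exponent $D_I$ degenerates to $0$ on the full vertex set. Under $\psi$ it becomes $\exp(\sum_{i=0}^n x_i)-1=(\sum_{i=0}^n x_i)\cdot(\text{unit})$, i.e. the degree-one relation $\sum_{i=0}^n x_i=0$ times a unit. As in the Remark following the definition of $\B_G$, this is precisely the relation that eliminates one of the $n+1$ variables and reconciles the $(n+1)$-variable presentation with the $n$-variable algebra $\B_G^T$ on $\{x_1,\dots,x_n\}$. Consequently $\psi(\widetilde{\I}_G^T)=\widetilde{\J}_G^T$ and $\cD_G^T\cong\B_G^T$.

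Combining the two lemmas with Theorem~\ref{th:trees} then yields isomorphisms of all four algebras, and the total-dimension claim is read off directly from Theorem~\ref{th:trees}. The main obstacle---really the only place where the tree argument departs from the forest one---is the correct bookkeeping for the exceptional generator $q_{\{0,\dots,n\}}^T$: I must verify both that its image under $\psi$ carries exactly the linear relation $\sum x_i$ (and not a unit, which the naive formula $(\sum x_i)^{D_I}$ with $D_I=0$ would wrongly suggest) and that replacing it by this degree-one relation matches the passage from $n+1$ to $n$ variables in $\B_G^T$. Everything else is a verbatim copy of the forest proofs.
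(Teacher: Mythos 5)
Your proposal is correct and follows essentially the same route as the paper: the authors likewise transcribe Lemma~\ref{lem:isCK} to get $\CC_G^T=\KK_G^T$ inside $\Phi_G^T$, invoke Theorem~\ref{th:trees} for $\B_G^T\cong\CC_G^T$, and repeat the proof of Lemma~\ref{lem:isBD} with the sole modification of adjoining the variable $x_0=-\left(\sum_{i=1}^n x_i\right)$ to $\B_G^T$ --- which is exactly your observation that $\psi$ sends the exceptional generator $q_{\{0,\dots,n\}}^T$ to $\left(\sum_{i=0}^n x_i\right)\cdot(\text{unit})$. Your write-up is in fact more detailed than the paper's sketch on precisely this point, and the bookkeeping you do there is the right one.
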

\begin{proof}
The proof is similar to  that of Theorem~\ref{th:4alg}. 
Algebras $\CC_G^T$ and $\KK_G^T$  coincide as subalgebras of $\Phi_G^T$ (but they have different filtrations); 
algebras $\CC_G^T$ and $B_G^T$ are isomorphic by Theorem~\ref{th:trees}. 
The proof of the isomorphism between $\cD_G^T$ and $B_G^T$ is the same as above; we  only need to add the variable $x_0=-(\sum_{i=1}^nx_i)$ to $B_G^T$. 
\end{proof}

\begin{theorem}\label{th:main-tree} For any simple graph $G$,  algebras  $\cD_G^T$ and $\KK_G^T$ are  isomorphic  as filtered algebras. 
\end{theorem}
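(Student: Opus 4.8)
The plan is to reproduce, essentially verbatim, the argument used for Theorem~\ref{th:main}, since the tree case differs from the forest case only in the bookkeeping of the defining relations. First I would exhibit the natural surjective homomorphism
\[
h^T:\cD_G^T\to\KK_G^T,\qquad h^T(y_i)=Y_i^T,\ i=0,1,\dots,n.
\]
To see that $h^T$ is well defined, I must check that each generator of $\I_G^T$ is killed, i.e. that $q_I^T=\left(\prod_{i\in I}Y_i^T-1\right)^{D_I}=0$ in $\KK_G^T\subset\Phi_G^T$ for every nonempty proper $I$, and that the full-product relation $\prod_{i=0}^n Y_i^T-1=0$ holds as well. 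The first batch follows from the computation already used in Lemma~\ref{lem:isCK} and Lemma~\ref{lem:isBD}: writing $\prod_{i\in I}Y_i^T-1=\exp\!\left(\sum_{i\in I}X_i^T\right)-1$, this element is divisible by $\sum_{i\in I}X_i^T$, whose $D_I$-th power vanishes in $\Phi_G^T$ precisely because of the additional non-slim relations $\prod_{e\in H}\phi_e=0$ that cut the exponent from $D_I+1$ down to $D_I$. The full-product relation holds because $\sum_{i=0}^n X_i^T=0$, so $\prod_{i=0}^n Y_i^T=\exp(0)=1$.

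Once $h^T$ is known to be a well-defined surjection, the proof is finished by a dimension count, exactly as in Theorem~\ref{th:main}. By Theorem~\ref{th:4alg-tree} the algebras $\cD_G^T$ and $\KK_G^T$ have the same finite dimension over $\K$ (namely the number of spanning trees in $G$). A surjective $\K$-linear map between finite-dimensional vector spaces of equal dimension is automatically injective, hence $h^T$ is an isomorphism of algebras. Finally I would observe that $h^T$ sends $y_i$ to $Y_i^T$, which lies in the first piece of the filtration of $\KK_G^T$, so $h^T$ respects the filtration by construction and is therefore an isomorphism of \emph{filtered} algebras, which is the claim.

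I expect the only genuinely substantive step to be the verification that the relations defining $\I_G^T$, in particular the reduced exponents $D_I$ rather than $D_I+1$, are exactly the relations satisfied by the $Y_i^T$ in $\Phi_G^T$. This is the point where the passage from forests to trees enters, and it hinges on how the slim/non-slim relations in $\Phi_G^T$ interact with powers of $\sum_{i\in I}X_i^T$; in effect one needs that $\left(\sum_{i\in I}X_i^T\right)^{D_I}=0$ while it was only $\left(\sum_{i\in I}X_i^T\right)^{D_I+1}=0$ in $\Phi_G$. Since this vanishing is precisely what underlies Theorem~\ref{th:trees} and Theorem~\ref{th:4alg-tree} (whose proof already established the isomorphism $B_G^T\cong\cD_G^T$ via the same power-series substitution $\psi$), I would simply invoke those results rather than recompute, reducing the present theorem to the abstract \emph{surjection-plus-equal-dimension} principle. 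Everything else is routine and identical to the forest case.
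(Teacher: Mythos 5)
Your proposal is correct and takes essentially the same route as the paper: the paper proves Theorem~\ref{th:main-tree} simply by repeating the proof of Theorem~\ref{th:main}, i.e., the surjective homomorphism $h^T(y_i)=Y_i^T$ combined with the equal-dimension count from Theorem~\ref{th:4alg-tree} and the observation that generators go to generators, so the filtration is preserved. Your explicit verification that $\bigl(\prod_{i\in I}Y_i^T-1\bigr)^{D_I}=0$ holds because the cut edges of a proper nonempty $I$ form a non-slim subgraph (forcing $\bigl(\sum_{i\in I}X_i^T\bigr)^{D_I}=0$ in $\Phi_G^T$) is exactly the detail the paper leaves implicit, and it is sound.
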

\begin{proof}
Similar to the above proof of Theorem~\ref{th:main}.
\end{proof}

 \medskip
To move further,  we need to give a definition.

\begin{definition} Let $G$ be a connected graph. We define its $\Delta$-subgraph $\widehat{G}\subseteq G$ as the subgraph  with following edges and vertices:
\begin{itemize}
\item $e\in E(\widehat{G}),$ if $e$ is not a bridge (i.e., $G\setminus e$ is still connected),
\item $v\in V(\widehat{G}),$ if there is an edge $e\in E(\widehat{G})$ incident to  $v$.
\end{itemize}
\end{definition}

In general, $\widehat{G}$ contains more information about $G$  than its matroid, because there exist graphs with isomorphic matroids and non-isomorphic $\Delta$-subgraphs, see Figure~\ref{delta-g1g2}.

Recall that in a recent paper \cite{Ne},  the first author has shown that $\CC_G^T$ depends only on the bridge-free matroid of $G$. Namely, 

\begin{proposition}\label{nenashev-tree}{\rm [Proposition~12 of \cite{Ne}]}  For any two connected graphs $G_1$ and $G_2$ with isomorphic bridge-free matroids (matroids of their $\Delta$-subgraphs),  algebras $\CC_{G_1}^T$ and $\CC_{G_2}^T$ are isomorphic.
\end{proposition}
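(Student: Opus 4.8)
The plan is to prove that $\CC_G^T$ depends only on the bridge-free matroid $M(\widehat G)$, after which the claimed isomorphism is immediate. Since $\CC_G^T\cong B_G^T$ by Theorem~\ref{th:trees}, I would work with the intrinsic presentation $B_G^T=\K[x_1,\dots,x_n]/\J_G^T$ and first recast the defining relations in matroid language. For a connected $G$ a subgraph $H$ is non-slim exactly when $E(G)\setminus H$ fails to span $G$, that is, when $H$ contains a cocircuit (minimal edge cut) of $M(G)$; the minimal non-slim subgraphs are precisely the cocircuits, and the singleton cocircuits are the bridges. Thus the extra relations that cut $\CC_G$ down to $\CC_G^T$ are the cocircuit products $\prod_{e\in C^*}\phi_e=0$, a piece of data belonging entirely to $M(G)$. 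In particular $\phi_e=0$ for every bridge $e$, so the bridges never enter any generator $X_i^T$.

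Next I would eliminate the bridges. Adding a bridge to $G$ amounts to attaching a pendant vertex $v$ by an edge; in the presentation $B_G^T$ this introduces a variable $x_v$ together with the singleton-cut relation $x_v=0$ (since $D_{\{v\}}=1$), while every other cut-power $(\sum_{i\in I}x_i)^{D_I}$ is unaffected once $x_v$ is set to zero. Hence $B_{G'}^T\cong B_G^T$, and iterating shows that $B_G^T$ is insensitive to bridges. Matroid-theoretically this is the statement that the cocircuits of $M(\widehat G)$ are exactly the cocircuits of $M(G)$ of size at least two, so the surviving relations, and the whole algebra, are governed by $M(\widehat G)=M(G)\setminus\{\text{coloops}\}$ rather than by $M(G)$.

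It then remains to treat the bridgeless case: for $G$ with no bridges I must show $\CC_G^T\cong B_G^T$ depends only on $M(G)=M(\widehat G)$. Here I would adapt the argument behind Theorem~\ref{nenashev} (Theorem~5 of \cite{Ne}), where the cut-power ideal $\J_G$ with exponents $D_I+1$ is proved to be determined by $M(G)$. Since the exponent $D_I$ is itself the size of the corresponding edge cut, hence a matroid invariant, and since the exponents enter that construction uniformly, the same reasoning should apply verbatim to $\J_G^T$ with exponents $D_I$, yielding that $B_G^T$ is a function of $M(\widehat G)$ alone. I expect this descent to be the main obstacle. The difficulty is that the abstract isomorphism $\CC_{G_1}\cong\CC_{G_2}$ produced by Theorem~\ref{nenashev} is \emph{not} induced by an edge relabeling $\phi_e\mapsto\phi_{e'}$ in the ambient algebra $\Phi_G$ — the generators $X_i$ record the incidence structure, not merely the matroid — so one cannot simply transport the cocircuit relations $\prod_{e\in C^*}\phi_e=0$ along it. Instead the cocircuit ideal must be realized intrinsically inside $B_G^T$, and the matroid-invariance proof of \cite{Ne} re-run with the shifted exponents, verifying that it only ever consults the cut-size function $I\mapsto D_I$ and the cocircuit structure, both of which $M(\widehat G)$ supplies. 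As an optional simplification, the multiplicativity $\tau(G)=\prod_B\tau(B)$ of spanning-tree counts over the $2$-connected blocks $B$ (together with the additivity of external activity and Theorem~\ref{th:trees}) suggests a block decomposition $\CC_G^T\cong\bigotimes_B\CC_B^T$; since the $M(B)$ are exactly the connected components of $M(\widehat G)$, this reduces the problem to connected matroids and isolates the genuinely new content in the $2$-connected case.
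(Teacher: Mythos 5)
Your opening reformulation (non-slim $=$ contains a cocircuit, minimal non-slim subgraphs $=$ bonds, singleton cocircuits $=$ bridges, hence $\phi_e=0$ for bridges in $\Phi_G^T$) is correct, but the proof has two genuine gaps. The first is the bridge-elimination step. You assert that adding a bridge to a graph "amounts to attaching a pendant vertex $v$ by an edge", but a bridge of a connected graph need not be a pendant edge: take two disjoint triangles joined by a single edge $e$ (a dumbbell). Then $e$ is a bridge, neither endpoint has degree one, and in the presentation $B_G^T$ the bridge does not appear as a relation $x_v=0$ in one new variable; instead, for $I$ equal to the vertex set of one triangle one has $D_I=1$, so the ideal $\J_G^T$ acquires the \emph{linear} relation $\left(\sum_{i\in I}x_i\right)^{1}=0$, and the exponents $D_I$ of many other generators change too. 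So the induction "iterating shows that $B_G^T$ is insensitive to bridges" never gets started. (Even in the genuinely pendant case your claim that "every other cut-power is unaffected" is not literally true: $D_I$ grows by one for every $I$ containing the attachment vertex but not $v$, and you must check those relations become redundant modulo $x_v=0$.) The correct way to kill bridges is exactly the decomposition you relegate to an "optional simplification": if $e$ is a bridge then $\phi_e=0$ in $\Phi_G^T$ and $\CC_G^T$ factors into the algebras of the two components of $G\setminus e$; this is how the paper itself argues in the proof of Proposition~\ref{pr:Isom-tree}, and in your write-up it is offered only as a heuristic suggested by multiplicativity of spanning-tree counts, not proved.

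The second and more serious gap is that the bridgeless case --- the actual content of the proposition --- is never proved. You propose to "re-run" the matroid-invariance argument behind Theorem~\ref{nenashev} with exponents $D_I$ in place of $D_I+1$, and you correctly identify the obstruction yourself: a matroid isomorphism $E(G_1)\to E(G_2)$ is not induced by a relabeling of the $\phi_e$ compatible with the generators $X_i^T$, and the cut-power presentation is indexed by vertex subsets, which is not matroid data. Naming this obstacle is not the same as overcoming it: what is missing is the mechanism that makes the descent work, namely that inside $\Phi_G^T$ both the defining ideal (generated by cocircuit monomials) and the generating subspace of $\CC_G^T$ (the cut space of $G$ in the span of the $\phi_e$) are determined by the matroid up to a rescaling $\phi_e\mapsto\lambda_e\phi_e$ of the variables (uniqueness of representations of graphic matroids, Whitney's theorem), and that such rescalings are automorphisms of $\Phi_G^T$. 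Without an argument of this kind, your outline simply defers the entire proposition back to the source it is meant to replace: note that the present paper gives no proof of this statement either --- it is imported verbatim as Proposition~12 of \cite{Ne} --- and the only related argument the paper does contain (Proposition~\ref{pr:Isom-tree}) covers the weaker hypothesis of graph-isomorphic $\Delta$-subgraphs, precisely via the bridge decomposition described above.
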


 Unfortunately, we can not prove the converse implication at present although we conjecture that is should hold as well, see Conjecture~\ref{cut-spaces} in \S~\ref{sec:final}. 
 In case of filtered algebra $\KK_{G_1}^T$ and $\KK_{G_2}^T$ we  can also prove an appropriate result only in one direction, see Proposition~\ref{pr:Isom-tree}. 

\medskip
Similarly to \S~3 we can to define  $\cF[f]^T_G\subset \Phi_G$.
Let $f$ be a univariate polynomial or a formal power series over $\K$. We define the subalgebra $\cF[f]^T_G\subset \Phi_G$   as  generated by $1$ and by
$$f(X_i^T)=f\left(\sum c_{i,e}\phi_e\right),\; i=0,\dots,n.$$

\begin{proposition}\label{pr:Isom-tree} For univariate polynomial $f$ and any two 
 connected graphs $G_1$ and $G_2$ with isomorphic $\Delta$-subgraphs $\widehat{G}_1$ and $\widehat{G}_2$, algebras $\cF[f]^T_{G_1}$ and $\cF[f]^T_{G_2}$   are isomorphic as filtered algebras.
 Additionally, $\KK_{G_1}^T$ and $\KK_{G_2}^T$ are isomorphic as filtered algebras.
\end{proposition}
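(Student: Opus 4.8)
The plan is to prove the single implication asserted here (isomorphic $\Delta$-subgraphs give isomorphic algebras) by showing that both $\cF[f]^T_G$ and $\KK^T_G$ depend, up to filtered isomorphism, only on $\widehat G$. Concretely, I would produce a filtered isomorphism of each algebra with a tensor product of the analogous algebras attached to the connected components of $\widehat G$; the statement then follows at once from $\widehat G_1\cong\widehat G_2$.

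First I would observe that every bridge dies in $\Phi_G^T$. If $e$ is a bridge then $G\setminus e$ is disconnected, so the one-edge subgraph $\{e\}$ is non-slim and the defining relation forces $\phi_e=0$. Hence $X_i^T=\sum_e c_{i,e}\phi_e$ only involves non-bridge edges, i.e. edges of $\widehat G$, and for any vertex $v$ outside $\widehat G$ we get $X_v^T=0$, so the generator $f(X_v^T)=f(0)$ is trivial. Thus $\cF[f]^T_G$ is generated by $1$ together with the $f(X_v^T)$ for $v\in V(\widehat G)$, all living in the subalgebra of $\Phi_G^T$ generated by the non-bridge edge-variables. Writing $B^{(1)},\dots,B^{(m)}$ for the connected components of $\widehat G$, one checks that each $B^{(k)}$ is bridgeless and that contracting every $B^{(k)}$ to a point turns $G$ into a tree whose edges are exactly the bridges of $G$.

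The heart of the argument is the combinatorial lemma that slimness decouples across the $B^{(k)}$: for $H\subseteq E(\widehat G)$, the subgraph $G\setminus H$ is connected if and only if $B^{(k)}\setminus(H\cap E(B^{(k)}))$ is connected for every $k$. The easy direction collapses each internally connected component to a blob and uses connectivity of the bridge-tree; the other direction is the delicate one: if some $B^{(k)}$ splits into two parts after removing $H$, the unique-path property of the bridge-tree forbids re-connecting the two parts through the rest of $G$, since a returning bridge would create a cycle of bridges. I expect this lemma to be the main obstacle — it is elementary but must be argued with care, and it is precisely what makes the slimness relations of $\Phi_G^T$ restrict correctly to the components of $\widehat G$.

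Granting the lemma, and using that non-slimness is monotone under adding edges (so that the square-free monomial $\prod_{e\in H}\phi_e$ vanishes in $\Phi_G^T$ exactly when $H$ is non-slim), the monomial $\prod_{e\in H}\phi_e$ vanishes iff $\prod_{e\in H\cap E(B^{(k)})}\phi_e$ vanishes in $\Phi^T_{B^{(k)}}$ for some $k$, which is exactly the vanishing condition in $\bigotimes_k\Phi^T_{B^{(k)}}$. Therefore $\phi_e\mapsto\phi_e$ (into the factor containing $e$), with $\phi_e\mapsto 0$ for bridges, is an isomorphism $\Phi_G^T\cong\bigotimes_k\Phi^T_{B^{(k)}}$ matching the square-free-monomial bases indexed by slim subgraphs. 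Under it each $f(X_v^T)$ with $v\in V(B^{(k)})$ lands in the $k$-th factor, so the generated subalgebra is $\bigotimes_k\cF[f]^T_{B^{(k)}}$, giving a filtered isomorphism $\cF[f]^T_G\cong\bigotimes_k\cF[f]^T_{B^{(k)}}$ (the tensor product carries the tensor filtration and the map respects it). Since $\widehat G_1\cong\widehat G_2$ identifies the families $\{B^{(k)}\}$ as graphs, we conclude $\cF[f]^T_{G_1}\cong\cF[f]^T_{G_2}$. Finally, the identical decomposition runs verbatim with $Y_v^T=\exp(X_v^T)$ in place of $f(X_v^T)$, yielding $\KK_G^T\cong\bigotimes_k\KK^T_{B^{(k)}}$ and hence the claimed filtered isomorphism of $\KK^T_{G_1}$ and $\KK^T_{G_2}$.
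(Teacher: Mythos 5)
Your proposal is correct and follows essentially the same route as the paper: both reduce $\cF[f]^T_{G}$ (and $\KK^T_G$) to a product of the corresponding algebras over the connected components of $\widehat{G}$, so that the conclusion is immediate from $\widehat{G}_1\cong\widehat{G}_2$. The only discrepancy is that the paper phrases the decomposition as an iterated bridge-by-bridge ``Cartesian product'' and asserts it without proof, whereas your tensor-product formulation is the correct one --- dimensions must multiply, matching the product formula for spanning-tree counts across a bridge --- and your slimness-decoupling lemma supplies exactly the verification the paper leaves implicit.
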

\begin{proof}
Note that if $G$ has a bridge $e$, then filtered algebra $\cF[f]^T_{G}$  is the Cartesian product of filtered algebras $\cF[f]^T_{G'}$  and $\cF[f]^T_{G''}$ , where $G'$ and $G''$ are connected component of $G\setminus e$.

Thus filtered algebra $\cF[f]^T_{G}$  is the Cartesian product of such filtered algebras corresponding to the connected components of the $\Delta$-subgraph of $G$.

Therefore  if connected graphs $G_1$ and $G_2$ have isomorphic $\Delta$-subgraphs, then their filtered algebras $\cF[f]^T_{G_1}$  and $\cF[f]^T_{G_2}$  are isomorphic.
\end{proof}

\begin{remark}\label{rem-tree}{\rm 
In general case we can not prove that these algebras distinguish graphs with different $\Delta$-subgraphs. The proof of Theorem~\ref{th:Isom} does not work for two reasons.
Firstly,   $d(\tY_i)$ is not the degree of the $i$-th vertex in $G$. Secondly,  even if we can construct a similar bijection between edges, we do  not have  an analog of Proposition~\ref{prop:5claims}. Since in the proof we consider coefficients  of monomial 
$\phi_{e_1}\phi_{e_2}$, in case when ${e_1}$ and ${e_2}$ are not bridges and when $\{e_1,e_2\}$ is a cut, this monomial can still lie  in the ideal.

It is possible to construct such a bijection in a smaller set of graphs, namely for graphs such that, for any edge $e$ in the graph, there is another edge $e'$   which is multiple  to $e$. For such graphs we do  not have the second problem, because if $\{e_1,e_2\}$ is a cut, then $e_1$ and $e_2$ are multiple edges.
So, instead of the actual converse of Proposition~\ref{pr:Isom-tree}, we can prove the converse in the latter  situation, but we do not present this result here. 
}
\end{remark}

\smallskip
\begin{proposition}
\label{pr-hil-tree} In the above notation, for generic polynomials $f$ of degree at most $|G|$, the Hilbert series $HS_{\cF[f]^T_G}$ is the same. This generic Hilbert series (denoted by $HS_{{G}^T}$ below) is maximal  in the majorization partial order among  $HS_{\cF[g]^T_G}$ for $g$ running over the set of power series
 with non-vanishing linear term.
\end{proposition}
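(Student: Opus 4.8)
The plan is to mirror the proof of Proposition~\ref{pr-hil} almost verbatim, since the statement is the exact tree-analog and the structural ingredients carry over. The key observation is that for a filtered algebra, the partial sums of its Hilbert series have an intrinsic meaning: the sum of the first $k+1$ entries of $HS_{\cF[f]^T_G}$ equals the dimension of the span of all products $f^{\alpha_0}(X_0^T)f^{\alpha_1}(X_1^T)\cdots f^{\alpha_n}(X_n^T)$ with $\sum_{i=0}^n \alpha_i \leq k$, viewed as a subspace of $\Phi_G^T$. This is precisely the dimension of the image of the filtration step $k$, and it does not depend on any choice of generators. So the whole problem reduces to showing that this dimension is maximized, for each fixed $k$, by a generic choice of $f$.

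First I would fix the graph $G$ and recall from the discussion preceding the proposition that, since $(X_i^T)^{d_i+1}=0$, one may truncate $f$ at degree $|G|$ without changing $\cF[f]^T_G$; hence it suffices to let $f$ range over polynomials of degree at most $|G|$, a finite-dimensional parameter space. For such $f$, the entries $f^{\alpha_0}(X_0^T)\cdots f^{\alpha_n}(X_n^T)$ are polynomial in the coefficients of $f$, so the dimension of their span is a lower-semicontinuous function of those coefficients: it attains its maximum on a Zariski-dense open set. This gives both that the generic value exists (is constant on a dense open set) and that it dominates the value for any particular $f$, in particular for any formal power series $g$ with non-vanishing linear term (after the same truncation). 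Summing over $k$ and using the definition of the majorization order, the generic Hilbert series $HS_{G^T}$ majorizes every $HS_{\cF[g]^T_G}$.

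Finally I would note the only point where the tree case formally differs from the forest case: the ambient algebra is $\Phi_G^T$ rather than $\Phi_G$, i.e.\ one works modulo the extra relations $\prod_{e\in H}\phi_e=0$ for non-slim subgraphs $H$. This changes nothing in the argument, because the semicontinuity/genericity reasoning is purely about the dimension of a family of spans inside a fixed finite-dimensional vector space, and is insensitive to which relations cut out that space. One should also remark that the degree bound guarantees $HS_{\cF[f]^T_G}$ is supported in degrees $\leq |G|+1$, so the majorization inequalities need only be checked for finitely many $k$, which makes the genericity statement unproblematic.

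I do not expect a genuine obstacle here: the statement is a transparent repetition of Proposition~\ref{pr-hil}, and the substantive content (lower-semicontinuity of the rank of a polynomially-parametrized family of vectors) is entirely standard. The mildest care needed is in phrasing the majorization claim so that the comparison class is exactly the power series with non-vanishing linear term, all of which reduce, after truncation at degree $|G|$, to the finite-dimensional polynomial family over which genericity is taken. Accordingly the proof can be given in a single short paragraph referring back to the argument of Proposition~\ref{pr-hil}.
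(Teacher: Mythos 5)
Your proposal is correct and follows essentially the same route as the paper, which proves Proposition~\ref{pr-hil-tree} simply by referring back to the argument of Proposition~\ref{pr-hil}: identify partial sums of the Hilbert series with dimensions of spans of products $f^{\alpha_0}(X_0^T)\cdots f^{\alpha_n}(X_n^T)$, invoke genericity (semicontinuity of rank) after truncating $f$ at degree $|G|$, and note that only finitely many $k$ matter. Your added remark that the extra relations defining $\Phi_G^T$ are irrelevant to the semicontinuity argument is exactly the (implicit) reason the paper's proof carries over verbatim.
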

\begin{proof}
See the proof of Proposition~\ref{pr-hil}.
\end{proof}

\begin{example}
\label{delta}

{\rm Consider two graphs $G_1$ and $G_2$, see Fig.~\ref{delta-g1g2}. It is easy to check that subgraphs $\widehat{G}_1$ and $\widehat{G}_2$ have isomorphic matroids, implying that algebras $\CC_{G_1}^T$ and $\CC_{G_2}^T$ are isomorphic.

{\begin{figure}[htb!]

\centering
\includegraphics[scale=0.5]{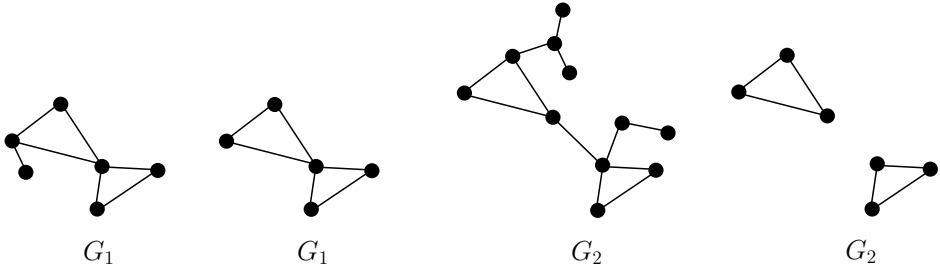}
\captionsetup{justification=centering}
\caption{Graphs and their $\Delta$-subgraphs.}
\label{delta-g1g2}
\end{figure}
}

$$HS_{\CC_{G_1}^T}(t)=HS_{\CC_{G_2}^T}(t)=1+4t+4t^2.$$ 
The Hilbert series of ``K-theoretic" algebras are distinct, namely 
$$HS_{\KK_{G_1}^T}(t)=1+5t+3t^2,$$ 
$$HS_{\KK_{G_2}^T}(t)=1+6t+2t^2.$$ 
These graphs are ``small", so their generic Hilbert series  coincides with the ``K-theoretic" one. 
Putting our information together, we get $$HS_{\CC_{G_1}^T}=HS_{\CC_{G_2}^T}\prec HS_{\KK_{G_1}^T}=HS_{{G_1}^T}\prec  HS_{\KK_{G_2}^T}=HS_{{G_2}^T}.$$} 
\end{example}\

\section {Related problems.}\label{sec:final}

 At first, we formulate several  problems in case of spanning forests;  their analogs for spanning trees are straight-forward.
 
 \begin{problem}
For which functions $f$ besides $a+b x$ and  $a+b e^x$, one can present relations in $\cF[f]_{G}$  for any graph $G$ in a simple way? In other words,  for which $f,$  one can define an algebra  similar to $B_G$ and $\cD_G$? 
\end{problem}

Since the Hilbert series  $HS_{\KK_{G}}$ and $HS_{G}$ are not expressible in terms of the Tutte polynomial of $G$,  they contain some other   information about $G$.

\begin{problem}
 Find combinatorial description of $HS_{\KK_{G}}$ and $HS_{G}$? 
\end{problem}

\begin{problem}
For which graphs $G$, Hilbert series $HS_{\KK_{G}}$ and $HS_{G}$  coincide? In other words, for which $G$, $\exp$ is a generic function?
\end{problem}

\begin{problem}
Describe combinatorial properties of $HS_{f,G}$  when $f$ is a function starting with a monomial of degree bigger than $1$, i.e. $f(x)=x^k+\cdots,\ k>1$? 
In particular, calculate the  total dimension of $\cF[f]_G$. 
\end{problem}

The most delicate and intriguing question is as follows. 

\begin{problem}
Do there exist non-isomorphic graphs $G_1$ and $G_2$ such that, for any polynomial $f(x)$, the Hilbert series  $HS_{f,G_1}$ and $HS_{f,G_2}$ coincide? In other words, does the collection of Hilbert series $HS_{f,G}$ taken over all formal series $f$ determine $G$ up to isomorphism?
\end{problem}

The following problems deal with the case of spanning trees only.

\begin{conjecture}{\rm [comp.~\cite{Ne}]}
\label{cut-spaces}
 Algebras $\CC_{G_1}^T$ and $\CC_{G_2}^T$ for graphs $G_1$ and $G_2$ are isomorphic if and only if 
their bridge-free matroids are isomorphic, where the bridge-free matroid is the graphical matroid of $\Delta$-subgraph. 

\end{conjecture}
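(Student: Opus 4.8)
The forward (``if'') direction is precisely Proposition~\ref{nenashev-tree}, so the entire content of the conjecture is the converse: an algebra isomorphism $\CC_{G_1}^T\cong\CC_{G_2}^T$ should force the cycle matroids of the $\Delta$-subgraphs $\widehat{G}_1$ and $\widehat{G}_2$ to be isomorphic. The plan is to imitate the proof of Theorem~\ref{nenashev} (the spanning-forest analogue, Theorem~5 of \cite{Ne}), reading the matroid off the algebra structure, while coping with the extra relations $\prod_{e\in H}\phi_e=0$ over non-slim subgraphs $H$.

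First I would reduce to the bridgeless case. By Proposition~\ref{pr:Isom-tree} (applied with $f(x)=x$), deleting a bridge writes $\CC_G^T$ as a product of the tree algebras of the two sides, so iterating over all bridges exhibits $\CC_G^T$ as a product over the $2$-edge-connected blocks of $\widehat{G}$; in particular $\CC_G^T$ depends only on $\widehat{G}$. Since the graphic matroid of $\widehat{G}$ is the matroid direct sum of the matroids of these blocks, it suffices to recover the matroid of a single $2$-edge-connected graph from its tree algebra, together with the decomposition into blocks, which should correspond to the factorization of $\CC_G^T$ into indecomposable factors.

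For the core step, the matroid is pinned down once its circuits, equivalently its rank function, are known. In the forest algebra $\CC_G$ these data can be extracted from dimensions of spans of products of the vertex generators $X_i$ and from the degree invariant $d(\cdot)$, essentially because every square-free monomial indexed by a forest is nonzero in $\Phi_G$; this is the mechanism underlying Theorem~\ref{nenashev}, and the same bookkeeping drives Lemma~\ref{pr-basis} and Proposition~\ref{prop:5claims}. The plan is to isolate the analogous invariant inside $\CC_G^T$. An attractive variant would be to reconstruct the forest algebra $\CC_{\widehat{G}}$ itself from $\CC_{\widehat{G}}^T$ and then invoke Theorem~\ref{nenashev} verbatim; this works only if the defining ideal of non-slim monomials is canonically determined by the isomorphism type of the quotient, which is the crux of the difficulty.

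The main obstacle is exactly the one recorded in Remark~\ref{rem-tree}. In $\Phi_G^T$ the monomial $\prod_{e\in H}\phi_e$ vanishes as soon as $H$ is non-slim, i.e. as soon as $G\setminus H$ is disconnected, equivalently whenever $H$ contains an edge cut. These are precisely the monomials one wants to use as witnesses of (in)dependence, so the clean degree/dimension arguments of the forest case break down: a pair $\{e_1,e_2\}$ forming a $2$-edge cut kills $\phi_{e_1}\phi_{e_2}$ even when $e_1,e_2$ are disjoint, so the analogue of Proposition~\ref{prop:5claims} fails and incidences can no longer be read off directly. Removing this obstruction would require a genuinely new invariant separating matroid dependence coming from cycles from that coming from cuts; at present one can carry the argument through only for the restricted families mentioned in Remark~\ref{rem-tree} (e.g.\ graphs in which every edge is multiple), which is why the statement remains a conjecture.
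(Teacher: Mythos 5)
You have not produced a proof, and to be fair, neither does the paper: the statement you were given is Conjecture~\ref{cut-spaces}, which the paper explicitly leaves open. The paper only establishes the ``if'' direction (Proposition~\ref{nenashev-tree}, imported from \cite{Ne}), proves the closely related Proposition~\ref{pr:Isom-tree} (isomorphic $\Delta$-subgraphs give isomorphic filtered algebras $\cF[f]^T_G$, via the bridge-deletion product decomposition you also invoke), and then states in Remark~\ref{rem-tree} that the converse cannot be proven at present. Your write-up correctly reproduces this state of affairs -- forward direction cited, reduction to $\widehat{G}$ via splitting off bridges, and the obstruction identified -- but the core converse step is only ever announced as a ``plan'' and then conceded to fail.

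The genuine gap is exactly where you locate it, so let me name it concretely. Your strategy needs an invariant of the abstract algebra $\CC_G^T$ from which the rank function (equivalently, the circuits) of the matroid of $\widehat{G}$ can be read off, in the way that nonvanishing of square-free monomials indexed by forests drives the arguments behind Theorem~\ref{nenashev}, Lemma~\ref{pr-basis} and Proposition~\ref{prop:5claims}. In $\Phi_G^T$ the relation $\prod_{e\in H}\phi_e=0$ holds whenever $H$ contains an edge cut, so the monomials you would use as witnesses of independence vanish for reasons having nothing to do with cycles: a $2$-edge cut $\{e_1,e_2\}$ kills $\phi_{e_1}\phi_{e_2}$ even for disjoint $e_1,e_2$. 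You offer no replacement invariant that distinguishes cut-induced vanishing from cycle-induced vanishing, and without one the argument does not start. Two subsidiary claims are also left unjustified: that $\CC_G^T$ factors \emph{uniquely} into indecomposable factors matching the $2$-edge-connected blocks (you need uniqueness, not just existence of the product decomposition, to transport the block structure across an abstract isomorphism), and that your reconstruction-of-$\CC_{\widehat{G}}$ variant could canonically identify the ideal of non-slim monomials inside the quotient -- which you yourself flag as ``the crux of the difficulty.'' So the verdict is: your diagnosis of the problem agrees with the paper's (Remark~\ref{rem-tree}), but the statement remains a conjecture, and your text should be read as an account of why it is hard rather than as a proof.
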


\begin{problem}
Which class  of graphs satisfies the property that if two graphs $G_1$ and $G_2$ from this class have  isomorphic 
$\KK_{G_1}^T$ and $\KK_{G_2}^T$, then their $\Delta$-subgraphs are isomorphic.
In other words, can one classify all pairs $(G_1,G_2)$ of connected graphs, which has isomorphic filtered algebras $\KK_{G_1}^T$ and
$\KK_{G_2}^T$?  {\rm(}The same problem for $\cF[f]_{G_1}^T$ and
$\cF[f]_{G_2}^T$, where $f(x)=x+ax^2+\cdots${\rm)}
\end{problem}

\end{document}